\newcommand{\arxiv}[1]{\href{http://arxiv.org/abs/#1}{\texttt{arXiv:#1}}}
\theoremstyle{plain}
\newtheorem{theorem}{Theorem}[section]
\newtheorem{lemma}[theorem]{Lemma}
\newtheorem{corollary}[theorem]{Corollary}
\newtheorem{proposition}[theorem]{Proposition}
\theoremstyle{definition}
\newtheorem{definition}[theorem]{Definition}
\newtheorem{example}[theorem]{Example}
\theoremstyle{remark}
\newtheorem{remark}[theorem]{Remark}
\title{\bf Weighted quasisymmetric enumerator for generalized permutohedra}
\author{Vladimir Gruji\'c\\
\small Faculty of Mathematics\\[-0.8ex]
\small Belgrade University\\[-0.8ex]
\small\tt vgrujic@matf.bg.ac.rs\\
\and
Marko Pe\v{s}ovi\'c\\
\small Faculty of Civil Engineering\\[-0.8ex]
\small Belgrade University\\[-0.8ex]
\small\tt mpesovic@grf.bg.ac.rs \and
Tanja Stojadinovi\'c\\
\small Faculty of Mathematics\\[-0.8ex]
\small Belgrade University\\[-0.8ex]
\small\tt tanjas@matf.bg.ac.rs}
\date{
\small Mathematics Subject Classifications: 52B40, 52B05, 16T05}
\begin{document}

\maketitle

\begin{abstract}
We introduce a weighted quasisymmetric enumerator function
associated to generalized permutohedra. It refines the Billera,
Jia and Reiner quasisymmetric function which also includes the
Stanley chromatic symmetric function. Beside that it carries
information of face numbers of generalized permutohedra. We
consider more systematically the cases of nestohedra and matroid
base polytopes.

\bigskip\noindent \textbf{Keywords}: generalized permutohedron,
quasisymmetric function, matroid, matroid base polytope,
combinatorial Hopf algebra, f-polynomial

\end{abstract}

\section{Introduction}

For a generalized permutohedron $Q$ there is a quasisymmetric
enumerator function $F(Q)$ introduced by Billera, Jia and Reiner
in \cite{BJR}. It enumerates positive integer lattice points
$\omega=(\omega_1,\ldots,\omega_n)\in\mathbb{Z}_+^{n}$ which are
$Q$-generic. It means that the weight function
$\omega^{\star}:Q\rightarrow\mathbb{R}$ defined by
$\omega^{\star}(x)=\langle\omega,x\rangle$ has its maximum at a
unique vertex $v$ of $Q$. That is

\begin{equation}\label{bjr}
F(Q)=\sum_{\omega \ Q-{\rm generic}}\mathbf{x}_\omega,
\end{equation} where
$\mathbf{x}_\omega=x_{\omega_1}x_{\omega_2}\cdots x_{\omega_n}$.
The generalized permutohedra are introduced and extensively
studied by Postnikov \cite{P} and Postnikov, Reiner and Williams
\cite{PRW}. They are deformations of the standard permutohedra
obtained by moving facets in normal directions. The faces of the
standard permutohedron $Pe^{n-1}$ are labelled by flags
$\mathcal{F}$ of subsets of the set $[n]$. A part of the reach
combinatorial structure of a generalized permutohedron $Q$ is a
certain statistic $\mathrm{rk}_Q$ on the face lattice of the
standard permutohedron $Pe^{n-1}$ which we call the $Q$-{\it
rank}. The $Q$-rank of a face $\mathcal{F}$ of $Pe^{n-1}$ is the
dimension of the face of $Q$ in which $\mathcal{F}$ is deformed.
The normal fan of the standard permutohedron $\Sigma_{Pe^{n-1}}$
is the braid arrangement fan. The space $\mathbb{R}^{n}$ is
decomposed by braid cones $\sigma_\mathcal{F}$. Each positive
integer vector $\omega\in\mathbb{Z}_+^{n}$ determines a unique
flag $\mathcal{F}_\omega$ such that $\omega$ lies in the relative
interior of the braid cone $\sigma_{\mathcal{F}_\omega}$. We
define the following weighted enumerator function associated to a
generalized permutohedron $Q$

\begin{equation}\label{def}
F_q(Q)=\sum_{\omega\in\mathbb{Z}_+^{n}}q^{\mathrm{rk}_Q(\mathcal{F}_\omega)}\mathbf{x}_\omega.
\end{equation}
The $Q$-rank of a face $\mathcal{F}$ is zero if it is deformed
into a vertex of $Q$. Henceforth $F_q(Q)$ specializes at $q=0$ to
the Billera, Jia and Reiner quasisymmetric enumerator function
$F_0(Q)=F(Q)$.

On the other hand, when a class of generalized permutohedra is
specified we obtain the well known combinatorial enumerators. The
case of graphical zonotopes $Q=Z_\Gamma$ is studied in \cite{GV2}.
The quasisymmetric function $F_q(Z_\Gamma)$ is a $q$-refinement of
the Stanley chromatic symmetric function $X_\Gamma$ of graphs

$$F_0(Z_\Gamma)=X_\Gamma.$$

The case of nestohedra is studied in \cite{GV1} for $q=0$ and for
a $q$-analog in \cite{GS}. For a subclass of graph-associahedra
$Q=P_\Gamma$ the enumerator $F(P_\Gamma)$ produces a new
quasisymmetric invariant of a graph $\Gamma$ with a nice behavior.

The both cases of graphical zonotopes and nestohedra have in
common that corresponding enumerators $F(Q)$ coincide with
universal morphisms from certain combinatorial Hopf algebras to
quasisymmetric functions. In the case of graphical zonotopes it is
the chromatic Hopf algebra of graphs and in the case of nestohedra
it is the non-cocommutative Hopf algebra on building sets and its
Hopf subalgebra of graphs. Billera, Jia and Reiner applied their
enumerator $F(Q)$ in the case of matroid base polytopes $Q=P_M$.
The invariant $F(P_M)$ also comes from a Hopf algebra, in this
case of matroids which was firstly introduced by Schmitt
\cite{Sch}.

A remarkable and unifying approach to Hopf monoid structures
constructed on combinatorial objects that provide generalized
permutohedra have been developed in the recently published paper
by M. Aguiar and F. Ardila \cite{AA}. Combining with the
universality of quasisymmetric functions in the category of
combinatorial Hopf algebras \cite{ABS} shows the naturality of the
invariant $F_q(Q)$ in enumerative and algebraic combinatorics.

The paper is organized as follows. In section 2 we review the
necessary facts about combinatorics of standard and generalized
permutohedra. In section 3 we review the basic facts about
quasisymmetric functions. In section 4 we introduce the
quasisymmetric function $F_q(Q)$ and show that it contains the
information about $f$-vectors of generalized permutohedra. It may
be regarded as a far-reaching illumination of the Stanley
$(-1)$-color theorem for numbers of acyclic orientations of a
graph. The cases of nestohedra and graph-associahedra are
considered in sections 5 and 6. In the rest of the paper the case
of matroid base polytopes is considered more thoroughly. In
section 7 we review some basic facts about combinatorics of
matroid base polytopes and introduce the combinatorial Hopf
algebra of matroids and its $q$-analog. Some calculation for
uniform matroids is presented. Finally in section 8 some
properties of the weighted quasisymmetric enumerator function of
matroids are derived.

\section{Generalized permutohedra}

The symmetric group $S_n$ acts on the space $\mathbb{R}^{n}$ by
permuting the coordinates. Recall that a $(n-1)$-dimensional
\emph{permutohedron} $Pe^{n-1}$ is the convex hull of the orbit of
a point with increasing coordinates $x_1<\cdots<x_n$

$$Pe^{n-1}=\mathrm{Conv}\{(x_{\omega(1)},x_{\omega(2)}\ldots,x_{\omega(n)})\mid \omega\in
S_n\}.$$

The \emph{braid arrangement} is the arrangement of hyperplanes
$\{x_i=x_j\}_{1\leq i<j\leq n}$ in the space $\mathbb{R}^n$. The
regions of the braid arrangement, called Weyl chambers are
labelled by permutations $\omega\in S_n$
$$C_\omega:=\{x_{\omega(1)}\leq x_{\omega(2)}\leq\ldots\leq x_{\omega(n)}\}.$$
The corresponding {\it braid arrangement fan} is the normal fan
$\Sigma_{Pe^{n-1}}$ of the permutohedron $Pe^{n-1}$. The cones of
the braid arrangement fan are called \emph{braid cones}.

A flag of the length $|\mathcal{F}|=k$ on the set
$[n]=\{1,\ldots,n\}$ is a chain of subsets
$$\mathcal{F}:\emptyset=:F_0\subset F_1\subset\ldots\subset F_{k-1}\subset F_{k}\subset F_{k+1}:=[n].$$
The type of a flag $\mathcal{F}$ is the following $(k+1)$-tuple of
integers

\begin{equation}\label{type}
\mathrm{type}(\mathcal{F})=(|F_1|-|F_0|,|F_2|-|F_1|,\ldots,|F_{k+1}|-|F_{k}|).
\end{equation}

The set of flags is ordered by refinements. We write
$\mathcal{G}\preceq\mathcal{F}$ if $\mathcal{F}$ refines
$\mathcal{G}$. There is an obvious order reversing one-to-one
correspondence between the face lattice of the permutohedron
$Pe^{n-1}$ and the lattice of flags of $[n]$. With no abuse of
notation we denote a face of the permutohedron $Pe^{n-1}$ by the
corresponding flag $\mathcal{F}$. Then
$\mathcal{G}\preceq\mathcal{F}$ if and only if
$\mathcal{F}\subseteq\mathcal{G}$ as faces of $Pe^{n-1}$. By this
convention we have that
$\mathrm{dim}\mathcal{F}=n-|\mathcal{F}|-1$. For example, the
facets correspond to the flags $\emptyset\subset A\subset[n]$,
while the vertices correspond to maximal flags.

The braid cone $\sigma_{\mathcal{F}}$ at the face $\mathcal{F}$ is
determined by the coordinates relations

\begin{equation}\label{braidcone}
\left\{\begin{array}{cc} x_p=x_q \ \ \mbox{if} \ \ p,q\in
F_{i+1}\setminus F_{i} \ \ \mbox{for some} \ \ i=0,\ldots,k, \\
x_p\leq x_q \ \ \mbox{if} \ \ p\in F_{i}\setminus F_{i-1} \ \
\mbox{and} \ \ q\in F_{i+1}\setminus F_{i} \ \ \mbox{for some} \ \
i=1,\ldots,k.\end{array}\right.
\end{equation}
Note that $\dim(\sigma_{\mathcal{F}})=|\mathcal{F}|$ and the
relative interior $\sigma_{\mathcal{F}}^{\circ}$, given by strict
inequalities in the second condition above, is homeomorphic to
$\mathbb{R}^{|\mathcal{F}|}.$ Conversely, the flag $\mathcal{F}$
can be reconstructed from the braid cone $\sigma_{\mathcal{F}}$ by
setting $p\in F_{i+1}\setminus F_{i}$ and $q\in F_{j+1}\setminus
F_j$ for some $0\leq i<j\leq k$ whenever $x_p<x_q$ for all points
in the relative interior $\sigma_{\mathcal{F}}^{\circ}$.

\begin{definition}
A convex polytope $Q$ is an $(n-1)$-dimensional {\it generalized
permutohedron} if its normal fan $\Sigma_Q$ is coarser than the
braid arrangement fan $\Sigma_{Pe^{n-1}}$.
\end{definition}

Generalized permutohedra are equivalently characterized as
deformations of the standard permutohedron $Pe^{n-1}$ by moving
its vertices with keeping directions of edges. Thus any edge of a
generalized permutohedron $Q$ lies in the direction of some
$e_i-e_j$, where $e_i, i=1,\ldots,n$ are the standard basis
vectors in $\mathbb{R}^{n}$. For equivalent descriptions of
generalized permutohedra see \cite{PRW}.

\begin{definition}\label{lattice}
For an $(n-1)$-dimensional generalized permutohedron $Q$ there is
a map of face lattices
$$\pi_Q:L(Pe^{n-1})\rightarrow L(Q)$$
determined by $\pi_Q(\mathcal{F})=G$ if and only if the relative
interior of the braid cone $\sigma_{\mathcal{F}}^{\circ}$ is
contained in the relative interior $\sigma_G^{\circ}$ of the
normal cone at the face $G$ of $Q$.
\end{definition}

\begin{proposition}\label{euler}
$$\displaystyle\sum_{\mathcal{F}:\pi_Q(\mathcal{F})=G}(-1)^{|\mathcal{F}|}=(-1)^{n-\dim(G)-1}.$$
\end{proposition}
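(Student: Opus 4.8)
The plan is to recognize the left-hand side as the combinatorial Euler characteristic of the relatively open normal cone $\sigma_G^{\circ}$, computed in two different ways. The starting point is the following observation. Since the normal fan $\Sigma_Q$ is coarser than the braid arrangement fan, every relatively open braid cone $\sigma_{\mathcal{F}}^{\circ}$ is connected and crosses no wall of $\Sigma_Q$, so it is contained in the relative interior of a single normal cone of $Q$; by Definition \ref{lattice} that cone is precisely $\sigma_{\pi_Q(\mathcal{F})}^{\circ}$. Because the braid fan is complete, its relatively open cones partition the ambient space, and hence the fibre of $\pi_Q$ over a face $G$ indexes exactly the open braid cells lying in $\sigma_G^{\circ}$. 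This yields the disjoint decomposition
\begin{equation*}
\sigma_G^{\circ}=\bigsqcup_{\mathcal{F}\,:\,\pi_Q(\mathcal{F})=G}\sigma_{\mathcal{F}}^{\circ},
\end{equation*}
which is the heart of the argument; everything after it is bookkeeping.

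Next I would apply the combinatorial (compactly supported) Euler characteristic $\chi$, the unique additive valuation on polyconvex sets taking the value $(-1)^{m}$ on a relatively open convex polyhedron of dimension $m$ (such a set being homeomorphic to $\mathbb{R}^{m}$). Each $\sigma_{\mathcal{F}}^{\circ}$ is homeomorphic to $\mathbb{R}^{|\mathcal{F}|}$, as noted after \eqref{braidcone}, so $\chi(\sigma_{\mathcal{F}}^{\circ})=(-1)^{|\mathcal{F}|}$. Additivity of $\chi$ over the disjoint union above then gives
\begin{equation*}
\chi(\sigma_G^{\circ})=\sum_{\mathcal{F}\,:\,\pi_Q(\mathcal{F})=G}\chi(\sigma_{\mathcal{F}}^{\circ})=\sum_{\mathcal{F}\,:\,\pi_Q(\mathcal{F})=G}(-1)^{|\mathcal{F}|}.
\end{equation*}

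It then remains to evaluate the same quantity directly. The set $\sigma_G^{\circ}$ is itself a relatively open convex cone, so $\chi(\sigma_G^{\circ})=(-1)^{\dim\sigma_G}$. Since $\Sigma_Q$ is the normal fan of the $(n-1)$-dimensional polytope $Q$, the normal cone at a face $G$ has dimension $\dim\sigma_G=n-1-\dim(G)$, in the same convention under which $\dim\sigma_{\mathcal{F}}=|\mathcal{F}|=n-1-\dim\mathcal{F}$ for the permutohedron. Hence $\chi(\sigma_G^{\circ})=(-1)^{n-\dim(G)-1}$, and comparing the two evaluations of $\chi(\sigma_G^{\circ})$ yields the claimed identity.

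The only genuinely delicate point is the disjoint decomposition: one must verify both that each open braid cell lands in a single open normal cone of $Q$ (this is exactly where coarseness of $\Sigma_Q$ enters, via the standard fact that a fan refinement maps relatively open cones into relatively open cones) and that these cells exhaust $\sigma_G^{\circ}$ without overlap. A reader who prefers not to invoke the valuation $\chi$ may instead intersect $\sigma_G$ with a small sphere about the origin in $\mathbb{R}^{n}/\langle(1,\dots,1)\rangle$, turning it into a subdivided $(\dim\sigma_G-1)$-ball; the alternating sum over the interior cones then equals $(-1)^{\dim\sigma_G}$ by the ordinary Euler relations for a ball and its boundary sphere. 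Either route reduces the statement to elementary Euler-characteristic accounting once the decomposition is established.
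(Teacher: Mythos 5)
Your proposal is correct and takes essentially the same route as the paper: both rest on the decomposition of the relatively open normal cone $\sigma_G^{\circ}$ into the relatively open braid cones $\sigma_{\mathcal{F}}^{\circ}$ over the fibre $\pi_Q^{-1}(G)$, followed by an Euler-characteristic count on the open $(n-\dim G-1)$-cell $\sigma_G^{\circ}$, which the paper compresses into the one-line appeal ``by inclusion-exclusion principle we have $f_k-f_{k-1}+\cdots+(-1)^{k}f_0=1$.'' Your invocation of the compactly supported Euler characteristic (together with the refinement argument for disjointness and the dimension bookkeeping $\dim\sigma_G=n-1-\dim G$) merely makes rigorous and explicit what the paper asserts tersely.
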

\begin{proof}
Consider the collection of flags
$\pi_Q^{-1}(G)=\{\mathcal{F}_1,\mathcal{F}_2,\ldots
\mathcal{F}_t\}$. We have
$\sigma_G^{\circ}=\bigcup_{i=1}^{t}\sigma_{\mathcal{F}_i}^{\circ}$
and $\sigma_G^{\circ}$ is homeomorphic to the $k$-dimensional open
cell, where $k=n-\mathrm{dim}G-1$. Let $f_i, i=0,1,\ldots,k$ be
the numbers of $i$-dimensional cones such that
$\sigma_\mathcal{F}^{\circ}\subset\sigma_G^{\circ}$. By
inclusion-exclusion principle we have

$$f_k-f_{k-1}+\cdots+(-1)^{k}f_0=1.$$
\end{proof}

\section{Quasisymmetric functions}

In this section we review the basic facts about combinatorial Hopf
algebras and quasisymmetric functions. The notion of combinatorial
Hopf algebra, originated in the work of Aguiar, Bergeron and
Sottile \cite{ABS}, gives a natural algebraic framework of
enumerative combinatorics. The extensive survey of Hopf algebra
theory in combinatorics may be found in \cite{GR}.

A combinatorial Hopf algebra is a graded, connected Hopf algebra
$\mathcal{H}$ equipped with a multiplicative linear functional
$\zeta:\mathcal{H}\rightarrow\mathbf{k}$ to the ground field. We
describe the combinatorial Hopf algebra $QSym$ of quasisymmetric
functions.


A quasisymmetric function $F=F(\mathbf{x})$ is a formal power
series of bounded degree in the countable ordered set of variables
$\mathbf{x}=(x_1,x_2,\ldots)$ such that coefficients by monomials
with the same list of ordered exponents are equal. This condition
produces the natural linear basis for the algebra $QSym$
consisting of monomial quasisymmetric functions

$$M_\alpha=\displaystyle\sum_{i_1<i_2<\cdots<i_k}x_{i_1}^{a_1}x_{i_2}^{a_2}\cdots
x_{i_k}^{a_k},$$ indexed by finite ordered sets of integers
$\alpha=(a_1,a_2,\ldots,a_k)$ called compositions of
$|\alpha|=a_1+a_2+\cdots a_k$ of the length $k(\alpha)=k$. The
coproduct, defined on monomial basis by

$$\Delta(M_\alpha)=\displaystyle\sum_{\beta\gamma=\alpha}M_\beta\otimes
M_\gamma,$$ where $\beta\gamma$ is the concatenation of
compositions, turns $QSym$ into a graded, connected Hopf algebra.


The principal specialization
$\mathbf{ps}:QSym\rightarrow\mathbf{k}[m]$ assigns to a
quasisymmetric function $F$ a polynomial in $m$ by evaluation map
$$\mathbf{ps}(F)(m)=F|_{x_1=\cdots=x_m=1,x_{m+1}=\cdots=0}.$$
The canonical character on quasisymmetric functions
$\zeta_{\mathcal{Q}}:QSym\rightarrow\mathbf{k}$ is defined by

$$\zeta_{\mathcal{Q}}(F)=\mathbf{ps}(F)(1).$$ It is easy to see that
on monomial basis we have
$$\mathbf{ps}(M_\alpha)(m)={m \choose k(\alpha)}$$ and specially

\begin{equation}\label{-1}
\mathbf{ps}(M_\alpha)(-1)=(-1)^{k(\alpha)}.
\end{equation}


The following theorem is fundamental in applications and explains
the ubiquity of quasisymmetric functions as enumerator functions
in combinatorics.

\begin{theorem}[\cite{ABS}, Theorem 4.1]\label{fundamental}
For a combinatorial Hopf algebra $(\mathcal{H},\zeta)$ there is a
unique morphism of graded Hopf algebras
$$\Psi:\mathcal{H}\rightarrow QSym$$
such that $\Psi\circ\zeta_{\mathcal{Q}}=\zeta$. For a homogeneous
element $h$ of degree $n$ the coefficients $\zeta_\alpha(h),
\alpha=(a_1,\ldots,a_k)$ of $\Psi(h)$ in monomial basis of $QSym$
are given by

$$\zeta_\alpha(h)=\zeta^{\otimes k}\circ(p_{a_1}\otimes
p_{a_2}\otimes\cdots\otimes p_{a_k})\circ\Delta^{(k-1)}(h),$$
where $p_i$ is the projection on the $i$-th homogeneous component
and $\Delta^{(k-1)}$ is the $(k-1)$-fold coproduct map of
$\mathcal{H}$.
\end{theorem}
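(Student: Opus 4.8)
The plan is to follow the Aguiar--Bergeron--Sottile argument: produce $\Psi$ explicitly through the stated coefficient formula, verify that this formula defines a graded Hopf morphism compatible with the distinguished characters, and then show that any competitor is forced to have the same coefficients. Write $\zeta^{(\alpha)}:=\zeta^{\otimes k}\circ(p_{a_1}\otimes\cdots\otimes p_{a_k})\circ\Delta^{(k-1)}$ for a composition $\alpha=(a_1,\ldots,a_k)$, so that the claim is $\Psi(h)=\sum_\alpha\zeta^{(\alpha)}(h)\,M_\alpha$. First I would check that $\Psi$ is well defined and graded: for homogeneous $h$ of degree $n$, the component $(p_{a_1}\otimes\cdots\otimes p_{a_k})\Delta^{(k-1)}(h)$ vanishes unless $|\alpha|=n$, since $\Delta^{(k-1)}$ respects the grading; hence only finitely many terms survive and $\Psi(h)$ is homogeneous of degree $n$.

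Next I would verify that $\Psi$ is a morphism of Hopf algebras. Comultiplicativity is the easier half: using $\Delta(M_\alpha)=\sum_{\beta\gamma=\alpha}M_\beta\otimes M_\gamma$ together with the coassociativity of $\Delta^{(k-1)}$, splitting a composition $\alpha$ at a single position matches the corresponding factorization of the iterated coproduct of $h$, so $\Delta_{QSym}\Psi(h)=(\Psi\otimes\Psi)\Delta_{\mathcal{H}}(h)$. The unit and counit conditions are immediate from connectedness. The character condition $\zeta_{\mathcal{Q}}\circ\Psi=\zeta$ follows because $\zeta_{\mathcal{Q}}(M_\alpha)=\mathbf{ps}(M_\alpha)(1)=\binom{1}{k(\alpha)}$ equals $1$ exactly when $k(\alpha)\le 1$: for $h$ of degree $n\ge 1$ the only surviving composition is $\alpha=(n)$, and $\zeta^{((n))}(h)=\zeta(p_n h)=\zeta(h)$.

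For uniqueness I would introduce the extraction functionals on $QSym$, namely $\zeta_{\mathcal{Q}}^{(\alpha)}:=\zeta_{\mathcal{Q}}^{\otimes k}\circ(p_{a_1}\otimes\cdots\otimes p_{a_k})\circ\Delta^{(k-1)}$, and show $\zeta_{\mathcal{Q}}^{(\alpha)}(M_\beta)=\delta_{\alpha\beta}$: expanding $\Delta^{(k-1)}(M_\beta)$ as a sum over concatenations $\beta^{(1)}\cdots\beta^{(k)}=\beta$, the projections force $|\beta^{(i)}|=a_i$ while $\zeta_{\mathcal{Q}}$ forces each $\beta^{(i)}$ to be a single part, which is possible only when $\beta=\alpha$. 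Thus $\zeta_{\mathcal{Q}}^{(\alpha)}$ reads off the coefficient of $M_\alpha$. Now if $\Psi'$ is any graded Hopf morphism with $\zeta_{\mathcal{Q}}\circ\Psi'=\zeta$, then because $\Psi'$ is graded and comultiplicative it intertwines the projections with the iterated coproducts, and the relation $\zeta_{\mathcal{Q}}\circ\Psi'=\zeta$ collapses the outer functionals; this gives that the $M_\alpha$-coefficient of $\Psi'(h)$ equals $\zeta^{(\alpha)}(h)$. Hence $\Psi'=\Psi$, which simultaneously establishes uniqueness and confirms the coefficient formula.

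The hard part will be multiplicativity, $\Psi(gh)=\Psi(g)\Psi(h)$. Here I would use that $\mathcal{H}$ is a bialgebra, so $\Delta^{(k-1)}(gh)=\Delta^{(k-1)}(g)\,\Delta^{(k-1)}(h)$ as a product in $\mathcal{H}^{\otimes k}$, and that $\zeta^{\otimes k}$ is multiplicative because $\zeta$ is. Matching this against the quasi-shuffle product $M_\beta M_\gamma=\sum_\delta M_\delta$, the sum being over overlapping shuffles of $\beta$ and $\gamma$, is the genuinely combinatorial step: one must check that the bookkeeping of how the grading projections distribute across the tensor factors of a product exactly reproduces the overlapping-shuffle structure. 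This is where multiplicativity of $\zeta$ is indispensable and where the only real computation lies; the remaining verifications are formal consequences of gradedness and coassociativity.
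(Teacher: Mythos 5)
The paper offers no proof of this statement: it is imported verbatim from Aguiar--Bergeron--Sottile (\cite{ABS}, Theorem 4.1), so there is no internal argument to compare against; the right benchmark is the original ABS proof, and your outline is a faithful reconstruction of exactly that proof. Your definition of $\Psi$ via the functionals $\zeta^{(\alpha)}$, the grading check, the coassociativity argument for comultiplicativity, and above all the uniqueness step via the extraction functionals $\zeta_{\mathcal{Q}}^{(\alpha)}(M_\beta)=\delta_{\alpha\beta}$ (which correctly uses only that a competitor $\Psi'$ is a \emph{graded coalgebra} morphism with $\zeta_{\mathcal{Q}}\circ\Psi'=\zeta$) are all sound and match ABS. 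Two remarks. First, you silently corrected a typo in the paper's statement, which writes $\Psi\circ\zeta_{\mathcal{Q}}=\zeta$ for what must be $\zeta_{\mathcal{Q}}\circ\Psi=\zeta$; your reading is the right one. Second, the one step you defer --- multiplicativity --- does go through along the lines you indicate, but note that multiplicativity of $\zeta$ alone is not quite enough: when you project $\Delta^{(k-1)}(g)\,\Delta^{(k-1)}(h)$ onto multidegree $(d_1,\ldots,d_k)$ you must sum over all splittings $d_i=b_i+c_i$ with $b_i,c_i\geq 0$, and the slots where $b_i=0$ or $c_i=0$ are handled by connectedness, the counit axiom, and $\zeta(1)=1$, which collapse those slots and make the surviving terms biject with overlapping shuffles of $\beta$ (the positive $b_i$'s) and $\gamma$ (the positive $c_i$'s) with the correct multiplicities --- slots with $b_i,c_i>0$ giving the overlapped parts and the rest the interleaved ones. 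With that bookkeeping made explicit, your sketch closes and coincides with the standard proof.
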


\section{Weighted quasisymmetric enumerator $F_q(Q)$}

A generalized permutohedron $Q$ comes with a natural map between
face lattices $\pi_Q:L(Pe^{n-1})\rightarrow L(Q)$ given by
Definition \ref{lattice} with $\pi_Q(\mathcal{F})=G$ if and only
if $\sigma_\mathcal{F}^{\circ}\subset\sigma_G^{\circ}$. This map
produces a natural statistic of faces of the standard
permutohedron $Pe^{n-1}$.

\begin{definition}\label{Q-rank}
For a generalized permutohedron $Q$ the $Q$-{\it rank} is a map on
the face lattice of the standard permutohedron
$\mathrm{rk}_Q:L(Pe^{n-1})\rightarrow\{0,1,\ldots,n-1\}$ given by
$$\mathrm{rk}_Q(\mathcal{F})=\mathrm{dim}(\pi_Q(\mathcal{F})).$$
\end{definition}

Let $\omega=(\omega_1,\omega_2,\ldots,\omega_n)\in\mathbb{Z}_+^n$
be an integer lattice vector with positive entries. It defines the
\emph{weight function} $\omega^{\star}:Q\rightarrow\mathbb{R}$ on
$Q$ by $\omega^{\star}(x)=\langle\omega,x\rangle$, where $\langle
, \rangle$ is the standard scalar product in $\mathbb{R}^{n}$.
Note that $Q$ lies in a hyperplane whose normal vector is
$(1,\ldots,1)$. The weight function $\omega^{\star}$ is maximized
along a unique face $G_\omega$ of $Q$ which is determined by the
condition that the vector $\omega$ lies in the relative interior
of its normal cone $\omega\in\sigma_{G_\omega}^{\circ}$. A weight
function $\omega^{\star}$ is called $Q$-{\it generic} if
$G_\omega$ is a vertex of $Q$.

If $Q=Pe^{n-1}$ is the standard permutohedron each
$\omega\in\mathbb{Z}_+^{n}$ determines a unique flag
$\mathcal{F}_\omega$ by the condition
$\mathcal{F}=\mathcal{F}_\omega$ if and only if the integer vector
$\omega$ lies in the relative interior of the corresponding braid
cone $\omega\in\sigma_\mathcal{F}^{\circ}$.

\begin{definition}
For a generalized permutohedron $Q$ let $F_q(Q)$ be a weighted
enumerator of positive integer vectors
$$F_q(Q)=\sum_{\omega\in\mathbb{Z}_+^{n}}q^{\mathrm{rk}_Q(\mathcal{F}_\omega)}\mathbf{x}_\omega.$$
\end{definition}

We expand the enumerator function $F_q(Q)$ in the monomial bases
of quasisymmetric functions.

\begin{definition}
For a flag $\mathcal{F}$ of subsets of $[n]$ let $M_\mathcal{F}$
be the enumerator of positive integer vectors in relative interior
of the corresponding braid cone
$$M_\mathcal{F}=\displaystyle\sum_{\omega\in\mathbb{Z}^{n}_+\cap\sigma_\mathcal{F}^{\circ}}\mathbf{x}_\omega.$$
\end{definition}
An enumerator $M_\mathcal{F}$ is exactly the monomial
quasisymmetric function depending only on the type of
$\mathcal{F}$, given by $(\ref{type})$

$$M_\mathcal{F}=M_{\mathrm{type}(\mathcal{F})}.$$
We obtain the expansion of $F_q(Q)$ according to the face lattice
of the standard permutohedron
\begin{equation}\label{expansion}
F_q(Q)=\displaystyle\sum_{\mathcal{F}\in
L(Pe^{n-1})}q^{\mathrm{rk}_Q(\mathcal{F})}M_\mathcal{F}.
\end{equation}
In the monomial basis $F_q(Q)$ has the expansion of the form
$$F_q(Q)=\sum_{\alpha\models n}p_\alpha(q)M_\alpha$$ where
$p_\alpha(q)$ are polynomials in $q$ indexed by compositions of
$n$. For a composition $\alpha$ the polynomial $p_\alpha(q)$ is
given by

$$p_\alpha(q)=\sum_{\mathcal{F}:\mathrm{type}(\mathcal{F})=\alpha}q^{\mathrm{rk}_Q(\mathcal{F})}.$$

By definition \ref{lattice} of the map $\pi_Q$ we have

$$\sum_{\omega\in\sigma_G^{\circ}}\mathbf{x}_\omega=\sum_{\mathcal{F}:\pi_Q(\mathcal{F})=G}M_\mathcal{F},$$
which gives the expansion of $F_q(Q)$ in terms of the face lattice
of $Q$

\begin{equation}\label{poQ}
F_q(Q)=\sum_{G\in
L(Q)}q^{\mathrm{dim}(G)}\sum_{\mathcal{F}:\pi_Q(\mathcal{F})=G}M_\mathcal{F}.
\end{equation}
From this we easily derive that the enumerator $F_q$ contains the
information about $f$-vectors of generalized permutohedra. The
$f$-vector of a convex $(n-1)$-dimensional polytope $P$ is the
integer vector $f=(f_0,f_1,\ldots,f_{n-1}),$ where $f_i$ is the
number of $i$-dimensional faces of $P$. It is codified by the
$f$-polynomial

$$f(P,q)=f_0+f_1q+f_2q^{2}+\cdots+f_{n-1}q^{n-1}.$$

\begin{theorem}\label{general}
The $f$-polynomial $f(Q,q)$ of an $(n-1)$-dimensional generalized
permutohedron $Q$ is determined by the principal specialization
$$f(Q,q)=(-1)^{n}\mathbf{ps}(F_{-q}(Q))(-1).$$
\end{theorem}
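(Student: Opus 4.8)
The plan is to start from the face-lattice expansion (\ref{poQ}) of $F_q(Q)$ and track what happens under the three operations appearing on the right-hand side of the claim: the substitution $q\mapsto -q$, the principal specialization $\mathbf{ps}$, and the evaluation at $-1$. Since $\mathrm{rk}_Q(\mathcal{F})=\dim(\pi_Q(\mathcal{F}))$, the outer weight in (\ref{poQ}) is genuinely a power of $q^{\dim G}$, so substituting $q\mapsto -q$ turns it into $(-q)^{\dim G}=(-1)^{\dim G}q^{\dim G}$ while leaving the monomial functions $M_\mathcal{F}$ untouched.

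Next I would apply $\mathbf{ps}(\cdot)(-1)$ term by term. Because $M_\mathcal{F}=M_{\mathrm{type}(\mathcal{F})}$ and the composition $\mathrm{type}(\mathcal{F})$ has length $|\mathcal{F}|+1$ (one part for each of the $|\mathcal{F}|+1$ gaps $F_{i+1}\setminus F_i$), equation (\ref{-1}) gives $\mathbf{ps}(M_\mathcal{F})(-1)=(-1)^{|\mathcal{F}|+1}$. Putting these together yields
$$\mathbf{ps}(F_{-q}(Q))(-1)=\sum_{G\in L(Q)}(-1)^{\dim G}q^{\dim G}\sum_{\mathcal{F}:\pi_Q(\mathcal{F})=G}(-1)^{|\mathcal{F}|+1}.$$

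The decisive step is to collapse the inner flag-sum using Proposition \ref{euler}, which states $\sum_{\mathcal{F}:\pi_Q(\mathcal{F})=G}(-1)^{|\mathcal{F}|}=(-1)^{n-\dim G-1}$; pulling out the extra sign gives $(-1)^{n-\dim G}$ for the inner sum. Substituting this back, the two powers of $-1$ combine as $(-1)^{\dim G}(-1)^{n-\dim G}=(-1)^n$, which is independent of $G$, so we are left with
$$\mathbf{ps}(F_{-q}(Q))(-1)=(-1)^n\sum_{G\in L(Q)}q^{\dim G}=(-1)^n f(Q,q).$$
Multiplying through by $(-1)^n$ gives the stated formula.

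I do not expect a genuine obstacle here: the mathematical content is carried entirely by Proposition \ref{euler} (the inclusion–exclusion/Euler-characteristic identity on the fibers of $\pi_Q$), and everything else is sign bookkeeping organized by the expansion (\ref{poQ}). The one point that requires care is the off-by-one relationship between the length $k(\mathrm{type}(\mathcal{F}))=|\mathcal{F}|+1$ of the composition and $|\mathcal{F}|$ itself, since misreading it would flip the global sign and displace the factor $(-1)^n$.
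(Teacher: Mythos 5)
Your proof is correct and takes essentially the same route as the paper's: both expand $F_q(Q)$ over the face lattice of $Q$ via $(\ref{poQ})$, use $\mathbf{ps}(M_\mathcal{F})(-1)=(-1)^{|\mathcal{F}|+1}$ coming from $(\ref{-1})$ together with $k(\mathrm{type}(\mathcal{F}))=|\mathcal{F}|+1$, and collapse the fiber sum over $\pi_Q^{-1}(G)$ with Proposition $\ref{euler}$. Your sign bookkeeping, $(-1)^{\dim G}(-1)^{n-\dim G}=(-1)^{n}$, is exactly the paper's observation that the inner exponent $|\mathcal{F}|+1+n+\dim(G)$ is even on each fiber, so the two arguments coincide.
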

\begin{proof}
By formula $(\ref{poQ})$ and the fact
$\mathbf{ps}(M_\mathcal{F})(-1)=(-1)^{|\mathcal{F}|+1}$ implied by
$(\ref{-1})$ the principal specialization of $F_q(Q)$ gives

$$(-1)^{n}\mathbf{ps}(F_{-q}(Q))(-1)=\displaystyle\sum_{G\in L(Q)}q^{\dim(G)}
\displaystyle\sum_{\mathcal{F}:\pi_Q(\mathcal{F})=G}(-1)^{|\mathcal{F}|+1+n+\dim(G)}.$$
The inner sum is equal to 1 by Proposition \ref{euler}, which
completes the proof.
\end{proof}

\subsection{Action of the antipode on $F_q(Q)$}

In this subsection we determine how the antipode $S$ of the Hopf
algebra of quasisymmetric functions $QSym$ acts on the weighted
quasisymmetric enumerator function $F_q(Q)$.

Define the opposite flag $\mathcal{F}^{op}$ to a flag
$\mathcal{F}:\emptyset=:F_0\subset F_1\subset\ldots\subset
F_{k}\subset F_{k+1}:=[n]$ by

$$\mathcal{F}^{op}:\emptyset\subset [n]\setminus
F_k\subset\cdots\subset[n]\setminus F_1\subset[n].$$ The
corresponding braid cones are related by
$\sigma_{\mathcal{F}^{op}}=-\sigma_\mathcal{F}.$ For a composition
$\alpha=(a_1,\ldots,a_k)$ the reverse composition is
$\mathrm{rev}(\alpha)=(a_k,\ldots,a_1)$. We have

\begin{equation}\label{rev}
\mathrm{type}(\mathcal{F}^{op})=\mathrm{rev}(\mathrm{type}(\mathcal{F})).
\end{equation}
Recall that flags are ordered by refinements. The following lemma
gives a particulary nice geometric meaning of the formula for the
antipode $S$, see \cite[Theorem 5.11]{GR} and reference within.

\begin{lemma}\label{antipode}
The antipode $S$ on the monomial quasisymmetric function
$M_\mathcal{F}$ associated to a flag $\mathcal{F}$ acts by
$$S(M_\mathcal{F})=(-1)^{|\mathcal{F}|+1}\sum_{\mathcal{G}\preceq\mathcal{F}^{op}}M_\mathcal{G}.$$
\end{lemma}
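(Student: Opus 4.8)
The plan is to deduce the statement from the classical closed formula for the antipode of $QSym$ in the monomial basis and then to rewrite its combinatorial indexing in the geometric language of flags. Recall that for a composition $\alpha$ of length $\ell(\alpha)$ one has
\[
S(M_\alpha)=(-1)^{\ell(\alpha)}\sum_{\beta}M_\beta,
\]
where $\beta$ ranges over all coarsenings of the reversed composition $\mathrm{rev}(\alpha)$; this is exactly the content of \cite[Theorem 5.11]{GR}. First I would apply this with $\alpha=\mathrm{type}(\mathcal{F})$, using that $M_\mathcal{F}=M_{\mathrm{type}(\mathcal{F})}$. Since the type of a flag $\mathcal{F}$ of length $|\mathcal{F}|=k$ is a $(k+1)$-tuple, its length is $\ell(\mathrm{type}(\mathcal{F}))=|\mathcal{F}|+1$, which already produces the global sign $(-1)^{|\mathcal{F}|+1}$ appearing in the statement.

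Next I would identify the index set of the sum. By $(\ref{rev})$ the reversed composition $\mathrm{rev}(\mathrm{type}(\mathcal{F}))$ equals $\mathrm{type}(\mathcal{F}^{op})$, so the classical formula sums $M_\beta$ over the coarsenings $\beta$ of $\mathrm{type}(\mathcal{F}^{op})$. The key combinatorial observation is that coarsenings of the type of a fixed flag are in type-preserving bijection with its subflags: deleting some of the proper members of a flag merges the corresponding consecutive blocks of its type, and conversely every coarsening arises this way exactly once. Applying this to $\mathcal{F}^{op}$, the coarsenings of $\mathrm{type}(\mathcal{F}^{op})$ correspond precisely to the flags $\mathcal{G}$ contained in $\mathcal{F}^{op}$ as collections of subsets, that is, to the flags with $\mathcal{G}\preceq\mathcal{F}^{op}$, and under this bijection $M_\beta=M_{\mathrm{type}(\mathcal{G})}=M_\mathcal{G}$. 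Substituting turns the classical sum into $\sum_{\mathcal{G}\preceq\mathcal{F}^{op}}M_\mathcal{G}$, which is the desired expression.

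I expect the only real obstacle to be the bookkeeping of order directions, since two reversals are in play: the face lattice of $Pe^{n-1}$ is anti-isomorphic to the lattice of flags, and the passage $\mathcal{F}\mapsto\mathcal{F}^{op}$ reverses the chain. I would therefore state carefully, once, that $\mathcal{G}\preceq\mathcal{F}^{op}$ means precisely $\mathcal{G}\subseteq\mathcal{F}^{op}$ as collections of subsets (equivalently, $\mathcal{F}^{op}$ refines $\mathcal{G}$), and that this matches the coarsening direction of the corresponding types, so that the refinement convention of the paper and the coarsening convention of the classical formula align. As an independent cross-check one can verify the formula directly through Takeuchi's antipode formula, expanding the iterated deconcatenation coproduct of $M_{\mathrm{type}(\mathcal{F})}$, projecting onto the augmentation ideal, and re-multiplying by the quasi-shuffle product; this recovers the same coarsening sum and confirms the sign.
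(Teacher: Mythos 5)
Your proposal is correct and matches the paper's intent exactly: the paper gives no proof of Lemma \ref{antipode}, simply citing the classical antipode formula \cite[Theorem 5.11]{GR}, and your argument supplies precisely the routine translation the authors leave implicit --- the sign $(-1)^{|\mathcal{F}|+1}$ from $\ell(\mathrm{type}(\mathcal{F}))=|\mathcal{F}|+1$, the identity $(\ref{rev})$, and the type-preserving bijection between coarsenings of $\mathrm{type}(\mathcal{F}^{op})$ and subflags $\mathcal{G}\preceq\mathcal{F}^{op}$. Your care with the refinement convention (that $\mathcal{G}\preceq\mathcal{F}^{op}$ means $\mathcal{G}\subseteq\mathcal{F}^{op}$ as chains, matching the coarsening direction on types) is exactly the right point to pin down, and the bijection is indeed one-to-one since a subflag is determined by which partial sums of the type survive.
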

\noindent This allows us to interpret $S(M_\mathcal{F})$ as the
enumerator function of integer lattice points lying in the
opposite braid cone $\sigma_{\mathcal{F}^{op}}$

$$(-1)^{|\mathcal{F}|+1}S(M_\mathcal{F})=\sum_{\omega\in\mathbb{Z}^{n}_+\cap\sigma_{\mathcal{F}^{op}}}\mathbf{x}_\omega.$$

Let $\pi_Q$ be the map associated to a generalized permutohedron
$Q$ by definition \ref{lattice}. We say that the face
$\pi_Q(\mathcal{F}^{op})$ is opposite to a flag $\mathcal{F}$. The
following theorem describes the action of the antipode on the
weighted quasisymmetric enumerator function $F_q(Q)$.

\begin{theorem}\label{generalantipode}
Given a generalized permutohedron $Q$ of dimension $n-1$ the
antipode $S$ acts on the weighted quasisymmetric enumerator
function $F_q(Q)$ by

$$S(F_q(Q))=(-1)^{n}\sum_{\mathcal{G}}f(\pi_Q(\mathcal{G}^{op}),-q)M_\mathcal{G},$$
where the sum is over all flags $\mathcal{G}$ of the set $[n]$ and
$f(\pi_Q(\mathcal{G}^{op}),q)$ is the $f$-polynomial of the face
$\pi_Q(\mathcal{G}^{op})$ opposite to a flag $\mathcal{G}$.
\end{theorem}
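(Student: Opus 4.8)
The plan is to start from the monomial expansion (\ref{expansion}) of $F_q(Q)$, apply the antipode term by term using Lemma \ref{antipode}, and then interchange the order of summation to read off the coefficient of each $M_\mathcal{G}$. Since $S$ is linear,
$$S(F_q(Q))=\sum_{\mathcal{F}\in L(Pe^{n-1})}q^{\mathrm{rk}_Q(\mathcal{F})}S(M_\mathcal{F})=\sum_{\mathcal{F}}(-1)^{|\mathcal{F}|+1}q^{\mathrm{rk}_Q(\mathcal{F})}\sum_{\mathcal{G}\preceq\mathcal{F}^{op}}M_\mathcal{G}.$$
I would next use that the opposite operation $\mathcal{F}\mapsto\mathcal{F}^{op}$ is a length-preserving, order-preserving involution of the flag lattice, so that $\mathcal{G}\preceq\mathcal{F}^{op}$ is equivalent to $\mathcal{G}^{op}\preceq\mathcal{F}$. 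Interchanging the two sums then gives $S(F_q(Q))=\sum_{\mathcal{G}}c_\mathcal{G}(q)\,M_\mathcal{G}$ with
$$c_\mathcal{G}(q)=\sum_{\mathcal{F}:\,\mathcal{G}^{op}\preceq\mathcal{F}}(-1)^{|\mathcal{F}|+1}q^{\mathrm{rk}_Q(\mathcal{F})},$$
so the whole theorem reduces to identifying $c_\mathcal{G}(q)$ with $(-1)^{n}f(\pi_Q(\mathcal{G}^{op}),-q)$.

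To evaluate $c_\mathcal{G}(q)$ I would group the flags according to their image face $G=\pi_Q(\mathcal{F})$. The flags with $\mathcal{G}^{op}\preceq\mathcal{F}$ are exactly the faces of $Pe^{n-1}$ contained in the face $\mathcal{G}^{op}$, and a short fan argument (using that $\Sigma_Q$ is coarser than the braid fan, so that $\sigma_\mathcal{F}\supseteq\sigma_{\mathcal{G}^{op}}$ forces $\sigma_{\pi_Q(\mathcal{F})}\supseteq\sigma_{\pi_Q(\mathcal{G}^{op})}$) shows that the resulting $G$ are precisely the faces of $G_0:=\pi_Q(\mathcal{G}^{op})$. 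Writing $\mathrm{rk}_Q(\mathcal{F})=\dim G$ on each block,
$$c_\mathcal{G}(q)=\sum_{G\subseteq G_0}q^{\dim G}\sum_{\substack{\mathcal{F}:\,\mathcal{G}^{op}\preceq\mathcal{F}\\ \pi_Q(\mathcal{F})=G}}(-1)^{|\mathcal{F}|+1}.$$
Thus the theorem follows once I establish the \emph{relative} form of Proposition \ref{euler}, namely
$$\sum_{\substack{\mathcal{F}:\,\mathcal{G}^{op}\preceq\mathcal{F}\\ \pi_Q(\mathcal{F})=G}}(-1)^{|\mathcal{F}|}=(-1)^{n-\dim G-1}\qquad(G\subseteq G_0),$$
because substituting it gives $c_\mathcal{G}(q)=(-1)^n\sum_{G\subseteq G_0}(-q)^{\dim G}=(-1)^n f(G_0,-q)$, which is exactly the claimed coefficient.

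The main obstacle is therefore this relative Euler identity, i.e. the same inclusion--exclusion as in Proposition \ref{euler} but carried out only among the braid cones that contain $\sigma_{\mathcal{G}^{op}}$. My approach would be to pass to the star of $\sigma_{\mathcal{G}^{op}}$: the braid cones with $\sigma_\mathcal{F}\supseteq\sigma_{\mathcal{G}^{op}}$, taken modulo the lineality $\mathrm{lin}(\sigma_{\mathcal{G}^{op}})$, form the normal fan of the face $\mathcal{G}^{op}$ of $Pe^{n-1}$, which is itself a product of permutohedra. In this quotient the image $G_0$ is again a generalized permutohedron and $\pi_Q$ restricts to its defining map $\pi_{G_0}$, so Proposition \ref{euler} applies verbatim to $G_0$ inside the product permutohedron $\mathcal{G}^{op}$. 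The delicate point is the dimension and sign bookkeeping: passing to the quotient drops each cone dimension by $|\mathcal{G}^{op}|=|\mathcal{G}|$ and replaces the ambient dimension $n-1$ by $\dim\mathcal{G}^{op}=n-|\mathcal{G}|-1$, and these two shifts combine to restore the exponent $(-1)^{n-\dim G-1}$. Establishing cleanly that each face of $Q$ occurs as a generalized permutohedron inside the corresponding product permutohedron, with $\pi_Q$ restricting to $\pi_{G_0}$, is the crux; once this is granted, the sign computation and the reassembly above complete the proof.
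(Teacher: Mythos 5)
Your proposal is correct and, in its main line, coincides with the paper's proof: the expansion $(\ref{expansion})$, Lemma \ref{antipode}, the equivalence $\mathcal{G}\preceq\mathcal{F}^{op}\Leftrightarrow\mathcal{G}^{op}\preceq\mathcal{F}$, and the interchange of summations reducing everything to the coefficient identity $c_\mathcal{G}(q)=(-1)^{n}f(\pi_Q(\mathcal{G}^{op}),-q)$ are exactly the steps in the paper. The one place you genuinely diverge is the finish. The paper disposes of the remaining identity with the one-line remark that it is ``a consequence of Definition \ref{Q-rank} and Proposition \ref{euler}'', whereas you correctly notice that Proposition \ref{euler} as stated sums over \emph{all} flags with $\pi_Q(\mathcal{F})=G$, while here only the flags refining $\mathcal{G}^{op}$ may be counted --- and these sets differ in general (already for $Q=\Delta^{2}$, $n=3$, $\mathcal{G}^{op}:\emptyset\subset\{3\}\subset[3]$, the vertex $e_1$ is the image of three flags, only one of which refines $\mathcal{G}^{op}$). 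So your ``relative Euler identity'' is precisely what the paper's citation tacitly requires, and your star construction is a sound way to prove it: modulo $\mathrm{span}(\sigma_{\mathcal{G}^{op}})$ the braid cones containing $\sigma_{\mathcal{G}^{op}}$ form the normal fan of the face $\mathcal{G}^{op}$ of $Pe^{n-1}$, the cones $\sigma_G$ with $G\subseteq G_0$ form the coarser star of $\Sigma_Q$, and the two dimension shifts by $|\mathcal{G}|$ cancel exactly as you say. One simplification worth noting: the ``crux'' you worry about --- realizing each face of $Q$ as a generalized permutohedron inside a product of permutohedra, with $\pi_Q$ restricting to $\pi_{G_0}$ --- is not actually needed, because the proof of Proposition \ref{euler} never uses polytopality: it is purely an inclusion--exclusion count of relatively open cones partitioning an open cell. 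Since linear projection takes relative interiors to relative interiors and the star of a fan is again a fan, the cells $\bar{\sigma}_{\mathcal{F}}^{\circ}$ with $\mathcal{G}^{op}\preceq\mathcal{F}$ and $\pi_Q(\mathcal{F})=G$ partition the open cell $\bar{\sigma}_{G}^{\circ}$ of dimension $n-\dim(G)-1-|\mathcal{G}|$, and the same count yields $\sum_{\mathcal{F}}(-1)^{|\mathcal{F}|}=(-1)^{n-\dim(G)-1}$ directly. In short: same proof as the paper's, with its compressed final citation expanded --- and expanded correctly, since the literal form of Proposition \ref{euler} does not apply verbatim at that point.
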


\begin{proof}

By the expansion $(\ref{expansion})$ and lemma \ref{antipode} we
have
$$S(F_q(Q))=\sum_\mathcal{F}q^{\mathrm{rk}_Q(\mathcal{F})}S(M_\mathcal{F})=
\sum_\mathcal{F}q^{\mathrm{rk}_Q(\mathcal{F})}(-1)^{|\mathcal{F}|+1}\sum_{\mathcal{G}\preceq\mathcal{F}^{op}}M_\mathcal{G},$$

which gives

$$S(F_q(Q))=\sum_\mathcal{G}M_\mathcal{G}\sum_{\mathcal{F}:\mathcal{G}^{op}\preceq\mathcal{F}}(-1)^{|\mathcal{F}|+1}q^{\mathrm{rk}_Q(\mathcal{F})}.$$
It remains to show that
$$\sum_{\mathcal{F}:\mathcal{G}^{op}\preceq\mathcal{F}}(-1)^{|\mathcal{F}|+1}q^{\mathrm{rk}_Q(\mathcal{F})}=(-1)^{n}f(\pi_Q(\mathcal{G}^{op}),-q),$$
i.e.

$$f(\pi_Q(\mathcal{G}^{op}),q)=\sum_{\mathcal{F}:\mathcal{G}^{op}\preceq\mathcal{F}}
(-1)^{|\mathcal{F}|+1+n+\mathrm{rk}_Q(\mathcal{F})}q^{\mathrm{rk}_Q(\mathcal{F})},$$
which is a consequence of Definition \ref{Q-rank} and Proposition
\ref{euler}.

\end{proof}

Theorem \ref{generalantipode} generalizes Theorem \ref{general}.
To see this note that $\pi_Q(\emptyset\subset[n])=Q$ and by
Theorem \ref{generalantipode} the coefficient in $S(F_q(Q))$ by
monomial quasisymmetric function $M_n=M_{\emptyset\subset[n]}$ is
equal to $(-1)^{n}f(Q,-q)$. This coefficient can be extracted from
$S(F_q(Q))$ by composing with the canonical character
$\zeta_{\mathcal{Q}}$ on quasisymmetric functions. Theorem
\ref{general} then follows from the fact

\begin{equation}\label{zetaS}
\zeta_\mathcal{Q}\circ S(F)=\mathbf{ps}(F)(-1),
\end{equation}
which can be easily seen to hold for monomial bases and
consequently for each quasisymmetric function $F$.

\begin{corollary}
For a generalized permutohedron $Q$ the following identity holds
$$\mathbf{ps}(S(F_q(Q)))(-1)=q^{\mathrm{dim}(Q)}.$$
\end{corollary}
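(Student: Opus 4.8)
The plan is to feed the explicit antipode formula of Theorem \ref{generalantipode} into the principal specialization. Since $S(F_q(Q)) = (-1)^n \sum_{\mathcal{G}} f(\pi_Q(\mathcal{G}^{op}), -q) M_\mathcal{G}$ is already expanded in the monomial basis, and $\mathbf{ps}$ acts only on the $\mathbf{x}$-variables, I can apply $\mathbf{ps}(\cdot)(-1)$ termwise, treating each polynomial $f(\pi_Q(\mathcal{G}^{op}), -q)$ as a scalar. As $M_\mathcal{G} = M_{\mathrm{type}(\mathcal{G})}$ has length $|\mathcal{G}| + 1$, formula $(\ref{-1})$ gives $\mathbf{ps}(M_\mathcal{G})(-1) = (-1)^{|\mathcal{G}|+1}$, so
$$\mathbf{ps}(S(F_q(Q)))(-1) = (-1)^n \sum_{\mathcal{G}} (-1)^{|\mathcal{G}|+1} f(\pi_Q(\mathcal{G}^{op}), -q),$$
the sum running over all flags $\mathcal{G}$ of $[n]$.

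Next I would exploit that $\mathcal{G} \mapsto \mathcal{G}^{op}$ is a length-preserving involution on flags, reindexing by $\mathcal{H} = \mathcal{G}^{op}$ to rewrite the sum as $(-1)^n \sum_{\mathcal{H}} (-1)^{|\mathcal{H}|+1} f(\pi_Q(\mathcal{H}), -q)$. Grouping the flags $\mathcal{H}$ according to the face $G = \pi_Q(\mathcal{H})$ of $Q$ and invoking Proposition \ref{euler} in the form $\sum_{\mathcal{H}:\pi_Q(\mathcal{H})=G}(-1)^{|\mathcal{H}|+1} = (-1)^{n-\dim G}$, all powers of $-1$ collapse and I arrive at a clean sum over the face lattice of $Q$:
$$\mathbf{ps}(S(F_q(Q)))(-1) = \sum_{G\in L(Q)}(-1)^{\dim G}\, f(G, -q).$$

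It then remains to prove the face-lattice identity $\sum_{G\in L(Q)}(-1)^{\dim G}\, f(G, t) = (-t)^{\dim Q}$ for an arbitrary polytope $Q$, whose specialization at $t = -q$ is exactly $q^{\dim Q}$. I would write each $f$-polynomial as $f(G, t) = \sum_{\emptyset \neq H \leq G} t^{\dim H}$, a sum over the nonempty faces $H$ of $G$ (all of which are faces of $Q$), and then interchange the order of summation to get $\sum_H t^{\dim H} \bigl(\sum_{G \geq H}(-1)^{\dim G}\bigr)$, where the inner sum ranges over the faces $G$ of $Q$ containing $H$.

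The main obstacle is the evaluation of this inner sum. The key geometric input is that the upper interval $[H, Q]$ in the face lattice of $Q$ is isomorphic to the face lattice of the quotient polytope at $H$, of dimension $\dim Q - \dim H - 1$; under this isomorphism $H$ plays the role of the empty face. The Euler-Poincar\'e relation, with the empty face included so that the reduced alternating sum of face numbers vanishes, then forces $\sum_{G \geq H}(-1)^{\dim G} = 0$ whenever $H \neq Q$. Only the top face $H = Q$ survives, contributing $(-1)^{\dim Q} t^{\dim Q} = (-t)^{\dim Q}$, which completes the proof.
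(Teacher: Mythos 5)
Your proof is correct, but it takes a genuinely different route from the paper's, which is a two-line argument: by equation (\ref{zetaS}), $\mathbf{ps}(S(F_q(Q)))(-1)=\zeta_{\mathcal{Q}}(S^{2}(F_q(Q)))$, and since the antipode of $QSym$ is an involution this equals $\zeta_{\mathcal{Q}}(F_q(Q))=\mathbf{ps}(F_q(Q))(1)$; at $m=1$ every $M_\alpha$ with $k(\alpha)>1$ vanishes, so only the trivial flag $\emptyset\subset[n]$ survives in the expansion (\ref{expansion}), contributing $q^{\mathrm{rk}_Q(\emptyset\subset[n])}=q^{\dim Q}$. You instead push the explicit antipode expansion of Theorem \ref{generalantipode} through the principal specialization, and your bookkeeping is sound: the sign $(-1)^{|\mathcal{G}|+1}$ from (\ref{-1}), the reindexing by the length-preserving involution $\mathcal{G}\mapsto\mathcal{G}^{op}$, and the regrouping via Proposition \ref{euler} (legitimate because every nonempty face of $Q$ lies in the image of $\pi_Q$, the braid fan refining $\Sigma_Q$) correctly reduce the corollary to the face-lattice identity $\sum_{G\in L(Q)}(-1)^{\dim G}f(G,t)=(-t)^{\dim Q}$, and your proof of that identity by interchanging summation and killing the inner sums $\sum_{G\geq H}(-1)^{\dim G}$ for proper $H$ is valid. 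The trade-off: your route imports a standard external fact the paper never needs, namely that intervals $[H,Q]$ in the face lattice are themselves face lattices of polytopes (face figures) of dimension $\dim Q-\dim H-1$ --- equivalently, that polytope face lattices are Eulerian, so the alternating sum over $[H,Q]$ with both endpoints included vanishes unless $H=Q$, which contributes $(-1)^{\dim Q}$. What your approach buys is a nontrivial byproduct of independent interest, the identity $\sum_{G\in L(Q)}(-1)^{\dim G}f(G,t)=(-t)^{\dim Q}$ for arbitrary polytopes; what the paper's buys is brevity and self-containment, using only the involutivity of $S$ and the near-immediate evaluation $\zeta_{\mathcal{Q}}(F_q(Q))=q^{\dim Q}$, without ever reopening the antipode formula.
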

\begin{proof}
It follows from the equation $(\ref{zetaS})$ and the fact that the
antipode $S$ of $QSym$ is of order two $S^{2}=\mathrm{Id}$.
\end{proof}

\subsection{Specializations of the enumerator $F_q(Q)$}

We review some specializations of the quasisymmetric enumerator
function $F_q(Q)$. By taking $q=0$ it is specified to the Billera,
Jia and Reiner quasisymmetric function $F(Q)$ which is the
enumerator of $Q$-generic weight vectors

$$F(Q)=\sum_{\omega\in\mathbb{Z}_+^{n}:\mathrm{rk}(\mathcal{F}_\omega)=0}\mathbf{x}_\omega.$$
Theorem \ref{general} may be regarded as the generalization of the
formula for number of vertices $f_0$ of a generalized
permutohedron $Q$, see \cite[Theorem 9.2]{BJR}

$$f_0=(-1)^{n}\mathbf{ps}(F(Q))(-1).$$
Note that $F_q(Q)$ degenerates at $q=1$ since
$F_1(Q)=\sum_{\omega\in\mathbb{Z}_+^{n}}\mathbf{x}_\omega=(M_1)^{n}.$
Theorem \ref{general} then gives

$$f(Q,-1)=f_0-f_1+\cdots+(-1)^{n-1}f_{n-1}=(-1)^{n}\mathbf{ps}(M_1^{n})(-1)=1,$$
which is Euler characteristic of $Q$.

For particular classes of generalized permutohedra the enumerator
$F_q$ specializes to known quasisymmetric invariants as it is
announced in introduction. We briefly review the case of graphical
zonotopes and more thoroughly the cases of nestohedra and matroid
base polytopes in subsequent sections.

\paragraph{Grafical zonotopes}

The vertices of a graphical zonotope $Z_\Gamma$ are in one-to-one
correspondence with regions of the corresponding graphical
arrangement. The integer points $\omega\in\mathbb{Z}_+^{n}$ can be
interpreted as graph colorings by labelling vertices of a graph
$\Gamma$. The normal cone at a vertex of $Z_\Gamma$ is a region of
the corresponding graphical arrangement. So $\omega$ determines a
proper coloring of the graph $\Gamma$ if and only if
$\mathrm{rk}_{Z_\Gamma}(\mathcal{F}_\omega)=0$. Thus the
enumerators of proper colorings and $Z_\Gamma$-generic integer
vectors coincide, i.e. $F_0(Z_\Gamma)=X_\Gamma$, where $X_\Gamma$
is the Stanley chromatic symmetric function (introduced in
\cite{S1}).

The main argument in \cite{GV2} of the proof of Theorem
\ref{general} for graphical zonotopes was based on the Humpert and
Martin cancelation-free formula for the antipode of the chromatic
Hopf algebra of graphs \cite{HM}.


\section{Nestohedra}

We refer the reader to \cite{PRW} for definitions and main
properties of nestohedra. The nestohedra are a class of simple
polytopes described by the notion of building sets. A collection
of subsets $B$ of a finite ground set $V$ is a building set if

$\diamond$ $\{i\}\in B$ for all $i\in V$ and

$\diamond$ if $I, J\in B$ and $I\cap J\neq\emptyset$ then $I\cup
J\in B$.

A building set $B$ is connected if $V\in B$. Let
$\Delta^{n-1}=\mathrm{Conv}\{e_1,\ldots,e_n\}$ be the standard
simplex in $\mathbb{R}^{n}$. The nestohedron associated to a
building set $B$ on the ground set $[n]$ is the Minkowsky sum of
simpleces $P_B=\sum_{I\in B}{\rm Conv}\{e_i\mid i\in I\}$.
Enumerate faces of $\Delta^{n-1}$ by subsets of $[n]$ in a way
that the face poset of $\Delta^{n-1}$ is isomorphic to the reverse
Boolean lattice on $[n]$. For a connected building set $B$ the
nestohedron $P_B$ is realized by successive truncations over faces
of $\Delta^{n-1}$ encoded by a building set $B$ in any
nondecreasing sequence of dimensions of faces. Thus facets of
$P_B$ are labelled by elements $I\in B\setminus\{[n]\}$. Recall
that a truncation of a convex polytope $P$ over a face $F\subset
P$ is the polytope $P\setminus F$ obtained by cutting $P$ with a
hyperplane $H_F$ which divides vertices in $F$ and vertices not in
$F$ in separated halfspaces. For a disconnected building set $B$
the associated nestohedron $P_B$ is the product of nestohedra
corresponding to components of $B$.

The following condition describes the face poset of a nestohedron
$P_B$ corresponding to a connected building set $B$, see
\cite{FS}, Theorem 3.14 and \cite{P}, Theorem 7.4. The
intersection $F_{I_1}\cap\ldots\cap F_{I_k}, k\geq2$ of facets
corresponding to a subcollection $N=\{I_1,\ldots,I_k\}\subset
B\setminus\{[n]\}$ is a nonempty face of $P_B$ if and only if
\begin{itemize}
\item[({\rm N1})] $I_i\subset I_j$ or $I_j\subset I_i$ or $I_i\cap
I_j=\emptyset$ for any $1\leq i<j\leq k$, \item[({\rm N2})]
$I_{j_1}\cup\cdots\cup I_{j_p}\notin B$ for any pairwise disjoint
sets $I_{j_1},\ldots,I_{j_p}$.
\end{itemize}
Subcollections that satisfy conditions (N1) and (N2) form a
simplicial complex, called the {\it nested set complex}, whose
face poset is opposite to the face poset of $P_B$.

\subsection{Hopf algebra $\mathcal{B}$}

Two building sets $B_1$ and $B_2$ are isomorphic if there is a
bijection of their sets of vertices $f:V_1\rightarrow V_2$ such
that $I\in B_1$ if and only if $f(I)\in B_2$. The addition of
building sets $B_1$ and $B_2$ on disjoint ground sets $V_1$ and
$V_2$ is the building set $B_1\sqcup B_2=\{I\subset V_1\sqcup
V_2\mid I\in B_1 \ \mbox{or} \ I\in B_2\}$. For a building set $B$
on $V$ and a subset $S\subset V$ the restriction on $S$ and the
contraction of $S$ from $B$ are defined by $B\mid_S=\{I\subset
S\mid I\in B\}$ and $B/S=\{I\subset V\setminus S\mid I\in B \
\mbox{or} \ I\cup S'\in B \ \mbox{for some} \ S'\subset S\}$. The
building sets obtained from $B$ by restrictions and contractions
are its minors.

The following combinatorial Hopf algebra of building set is
considered in \cite{GV1}. The set of all isomorphism classes of
finite building sets linearly generates the vector space
$\mathcal{B}$ over a field $\mathbf{k}$. The space $\mathcal{B}$
is a graded, commutative and non-cocommutative Hopf algebra with
the multiplication and the comultiplication
\[[B_1]\cdot [B_2]=[B_1\sqcup B_2] \ \ \mbox{and} \ \
\Delta([B])=\sum_{S\subset V}[B\mid_S]\otimes [B/S].\] The grading
$\mathrm{gr}([B])$ is given by the cardinality of the ground set
of $B$. A building set $B$ is connected if $[B]$ is irreducible,
i.e. it is not represented by an addition of two building sets.
Denote by $c(B)$ the number of connected components of $B$. Let
$\zeta:\mathcal{B}\rightarrow\mathbf{k}$ be a multiplicative
linear functional defined by $\zeta([B])=1$ if $B$ is a discrete
(consisting of singletons only) and $\zeta([B])=0$ otherwise. A
unique morphism $\Psi:(\mathcal{B},\zeta)\rightarrow
(QSym,\zeta_Q)$ of combinatorial Hopf algebras is given in the
monomial basis of quasismmetric functions by
\[\Psi([B])=\sum_{\alpha\models\mathrm{gr}(B)}\zeta_\alpha(B)M_\alpha.\]
The coefficients $\zeta_\alpha(B)$ have an enumerative meaning.
Let $\mathcal{L}:\emptyset\subset I_1\subset\cdots\subset I_k=V$
be a chain of subsets of the ground set $[n]$. Denote by
$|\mathcal{L}|=k$ its length and by $\mathrm{type}(\mathcal{L})$
its type which is a composition $\alpha=(i_1,\ldots,i_k)$ such
that for any $1\leq j\leq k$ the set $I_j\setminus I_{j-1}$ has
$i_j$ elements. We say that $\mathcal{L}$ is a splitting chain if
all minors $B\mid_{I_j}/I_{j-1}$ are discrete. Then
$\zeta_\alpha(B)$ is exactly the number of all splitting chains of
$B$ of a given type $\alpha$. For a building set $B$ on $[n]$ the
following identity holds (\cite[Theorem 4.5]{GV1})

\begin{equation}\label{eqn:eqn1}
F(P_B)=\Psi([B]).
\end{equation}

\subsection{$q$-analog}

We extend the basic field $\mathbf{k}$ into the field of rational
functions $\mathbf{k}(q)$ and define the character
$\zeta_q:\mathcal{B}\rightarrow\mathbf{k}(q)$ with
$\zeta_q([B])=q^{\mathrm{rk}(B)}$, where
$\mathrm{rk}(B)=\mathrm{gr}(B)-c(B)$. Let $\Psi_q:(\mathcal{B},
\zeta_q)\rightarrow (QSym, \zeta_Q)$ be a unique morphism of
combinatorial Hopf algebras over $\mathbf{k}(q)$.

Recall that a reflexive and transitive relation $\preccurlyeq$ on
$[n]$ is called a {\it preorder}. If in addition $p\preccurlyeq q$
or $q\preccurlyeq p$ for any $p,q\in [n]$ it is called a {\it weak
order} on $[n]$. Weak orders correspond to set compositions of
$[n]$. Each set composition determines a unique flag of subsets
and vise-versa. We say that a flag of subsets
$\mathcal{L}:\emptyset=I_0\subset I_1\subset\ldots\subset I_k=[n]$
extends a preorder $\preccurlyeq$ on $[n]$ if

\[i\prec j \ \ \mbox{implies} \ \ i\in I_p\setminus I_{p-1} \ \
\mbox{and} \ \ j\in I_q\setminus I_{q-1} \ \ \mbox{for some} \ \
1\leq p< q\leq k,\] where $i\prec j$ means that $i\preccurlyeq j$
and it is not $j\preccurlyeq i$.

Let $G=F_{I_1}\cap\ldots\cap F_{I_m}$ be a face of $P_B$ and
$N_G=\{I_1,\ldots,I_m\}\subset B\setminus\{[n]\}$ the
corresponding nested set. Note that $\dim G=n-1-m$. For each $I\in
N_G\cup\{[n]\}$ let $I^{\mathrm{root}}$ be the set of roots of $I$
given by

\[I^{\mathrm{root}}=I\setminus\cup\{J\in N_G\mid
J\varsubsetneq I\}.\]  Define a preorder $\preccurlyeq_G$ on $[n]$
corresponding to the face $G\subset P_B$ by

\[i\preccurlyeq_G j \ \ \mbox{if and only if} \ \ i\in I, j\in
I^{\mathrm{root}} \ \ \mbox{for some} \ \ I\in N_G\cup\{[n]\}.\]
We describe the map among face lattices
$\pi_{P_B}:L(Pe^{n-1})\rightarrow L(P_B)$ given by Definition
\ref{lattice}.

\begin{lemma}
For a face $G$ of a nestohedron $P_B$ and a flag of subsets
$\mathcal{F}$ on $[n]$ we have $\pi_{P_B}(\mathcal{F})=G$ if and
only if a flag $\mathcal{F}$ extends the preorder
$\preccurlyeq_G$.
\end{lemma}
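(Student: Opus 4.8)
The plan is to work directly from the geometric definition of $\pi_{P_B}$. For any $\omega$ in the relative interior $\sigma_{\mathcal{F}}^{\circ}$ one has $\pi_{P_B}(\mathcal{F})=G_\omega$, where $G_\omega$ is the face of $P_B$ along which the weight function $\omega^{\star}$ is maximized; so it suffices to compute $G_\omega$ and to show that $G_\omega=G$ holds exactly when $\mathcal{F}$ extends $\preccurlyeq_G$. To compute $G_\omega$ I would use the Minkowski decomposition $P_B=\sum_{I\in B}\mathrm{Conv}\{e_i\mid i\in I\}$: the face maximizing $\omega$ is the Minkowski sum of the maximizing faces of the summands, so $G_\omega=\sum_{I\in B}\mathrm{Conv}\{e_i\mid i\in M_I(\omega)\}$, where $M_I(\omega)=\{i\in I\mid\omega_i=\max_{j\in I}\omega_j\}$ is the set of coordinates of $I$ on which $\omega$ attains its maximum. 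Thus $G_\omega$ is encoded by the family $(M_I(\omega))_{I\in B}$, and since $\mathcal{F}$ is precisely the weak order recording the relative sizes of the coordinates of any $\omega\in\sigma_{\mathcal{F}}^{\circ}$, the whole statement reduces to a description of the open normal cone $\sigma_G^{\circ}$ through the coordinatewise order of $\omega$.

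The heart of the argument is the following description of that cone. I claim that for a face $G$ with nested set $N_G$,
\begin{equation*}
\omega\in\sigma_G^{\circ}\iff M_I(\omega)=I^{\mathrm{root}}\ \text{for every}\ I\in N_G\cup\{[n]\}.
\end{equation*}
In words, $\omega$ lies in the open normal cone of $G$ exactly when, for each set $I$ of the forest $N_G\cup\{[n]\}$, the coordinates of $\omega$ indexed by $I$ attain their maximum precisely on the roots $I^{\mathrm{root}}$. I would prove this in both directions. For necessity, recall that $G$ lies in the facet $F_I$ precisely for $I\in N_G$; tracking which summands $\mathrm{Conv}\{e_i\mid i\in I\}$ contribute a positive-dimensional face to $G_\omega$ forces the ties of $\omega$ to occur along the forest, and since $I^{\mathrm{root}}$ is by definition the part of $I$ not consumed by its proper descendants in $N_G$, this yields $M_I(\omega)=I^{\mathrm{root}}$. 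For sufficiency I would show that the equalities $M_I(\omega)=I^{\mathrm{root}}$, $I\in N_G\cup\{[n]\}$, already determine $M_I(\omega)$ for every $I\in B$ and that the resulting maximizing face has nested set exactly $N_G$; here the building set axioms and the nested set conditions (N1) and (N2) are used to propagate the maxima from the forest to an arbitrary $I\in B$ and to rule out any further tight facet.

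Finally I would translate the cone description into the preorder. Unwinding the definition $i\preccurlyeq_G j\iff i\in I,\ j\in I^{\mathrm{root}}$ for some $I\in N_G\cup\{[n]\}$, the condition $M_I(\omega)=I^{\mathrm{root}}$ for all such $I$ says exactly that $\omega_j=\max_{k\in I}\omega_k\geq\omega_i$ whenever $i\preccurlyeq_G j$, with strict inequality when $i\notin I^{\mathrm{root}}$. Equivalently, the weak order induced by the coordinates of $\omega$ refines $\preccurlyeq_G$: it sends strictly $\preccurlyeq_G$-smaller elements to strictly earlier blocks and keeps $\preccurlyeq_G$-equivalent elements (the root blocks) in a single block. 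Since that weak order is by construction the flag $\mathcal{F}_\omega=\mathcal{F}$, this is precisely the assertion that $\mathcal{F}$ extends $\preccurlyeq_G$. Combining with $\pi_{P_B}(\mathcal{F})=G_\omega$ and the cone description gives $\pi_{P_B}(\mathcal{F})=G\iff\omega\in\sigma_G^{\circ}\iff\mathcal{F}$ extends $\preccurlyeq_G$, as required.

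I expect the main obstacle to be the sufficiency half of the cone characterization: recovering the entire maximizing face, equivalently the full nested set $N_{G_\omega}$, from the finitely many equalities imposed only along $N_G\cup\{[n]\}$. The difficulty is that $G_\omega$ depends on $M_I(\omega)$ for all $I\in B$, so one must use (N1), (N2) and the closure of $B$ under unions of intersecting sets to show simultaneously that no facet outside $N_G$ becomes tight, that every facet of $N_G$ does, and that no spurious ties among coordinates are introduced.
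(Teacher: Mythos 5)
Your reduction is sound and, in substance, lands on the same pivot as the paper's own proof: by Definition \ref{lattice}, $\pi_{P_B}(\mathcal{F})=G$ holds iff $\sigma_{\mathcal{F}}^{\circ}\subseteq\sigma_G^{\circ}$, so everything hinges on a coordinate description of $\sigma_G^{\circ}$, and your claimed characterization
\begin{equation*}
\omega\in\sigma_G^{\circ}\iff M_I(\omega)=I^{\mathrm{root}}\ \text{for every}\ I\in N_G\cup\{[n]\}
\end{equation*}
is, after unwinding the weak and strict inequalities, exactly the description $\sigma_G=\{x\mid x_i\leq x_j\ \text{whenever}\ i\preccurlyeq_G j\}$ that the paper's proof simply quotes and compares with the braid-cone relations $(\ref{braidcone})$. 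Your final translation into the preorder is also correct, and in one respect more careful than the paper: you require $\preccurlyeq_G$-equivalent elements (the root blocks) to stay in a single block of $\mathcal{F}$, a clause absent from the paper's displayed definition of ``extends'' but genuinely needed. Indeed, for the path building set $B=\{\{1\},\{2\},\{3\},\{1,2\},\{2,3\},[3]\}$ and the edge $G$ with $N_G=\{\{1,2\}\}$, the flag $\emptyset\subset\{1\}\subset\{1,2\}\subset[3]$ satisfies the strict-relation condition of the paper's definition, yet $\pi_{P_B}$ sends it to the vertex with nested set $\{\{1\},\{1,2\}\}$, not to $G$; your reading is the one that makes the lemma true.

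The genuine gap is that your cone characterization is never actually proved, and it cannot be waved through: since the open braid cones partition the space and each $\sigma_G^{\circ}$ is a union of them, the characterization is \emph{equivalent} to the lemma itself, so asserting it would be circular. You sketch necessity in a sentence and explicitly defer sufficiency, which carries all the content. Your Minkowski-sum route is viable and more self-contained than the paper's (which invokes the known normal-cone description of nestohedra), but the propagation step must be executed. Concretely, what is missing is the lemma that the root conditions determine \emph{all} argmax sets: for every $J\in B$ one has $M_J(\omega)=J\cap I(J)^{\mathrm{root}}$, where $I(J)$ is the minimal member of $N_G\cup\{[n]\}$ containing $J$ (well defined, since by (N1) the members containing $J$ form a chain). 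The nonemptiness of $J\cap I(J)^{\mathrm{root}}$ is precisely where (N2) and the building set union axiom enter: if $J$ were covered by the children $K_1,\ldots,K_p$ of $I(J)$ that it meets, then $p\geq 2$ by minimality of $I(J)$, and iterating the union axiom gives $K_1\cup\cdots\cup K_p=J\cup K_1\cup\cdots\cup K_p\in B$, a union of pairwise disjoint members of $N_G$ lying in $B$, contradicting (N2). With this in hand, $G_\omega=\sum_{J\in B}\mathrm{Conv}\{e_i\mid i\in J\cap I(J)^{\mathrm{root}}\}$ depends only on $N_G$, and one checks that $\sum_{i\in K}x_i$ is constant at its minimum on $G_\omega$ exactly for $K\in N_G$ (for $K\notin N_G$ the summand $J=I(K)$ either meets $K$ partially, or forces $\#\{J\mid M_J(\omega)\subseteq K\}>\#\{J\mid J\subseteq K\}$), so that $G_\omega=G$; necessity then comes for free once one verifies that every $\omega$ satisfies the root conditions for exactly one nested set, since distinct open normal cones are disjoint. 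Until these steps are written out, your text is a correct and well-aimed plan, not a proof.
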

\begin{proof}
The proof follows from the coordinate descriptions of
corresponding normal cones. The normal cone $\sigma_G$ at the face
$G$ is given by system of inequalities

\[\sigma_G: x_i\leq x_j \ \ \mbox{if and only if} \ \
i\preccurlyeq_G j \ \ \mbox{for} \ \ i,j\in[n].\] The braid cone $
\sigma_\mathcal{F}$ corresponding to a flag $\mathcal{F}$ is given
by relations $(\ref{braidcone})$. We see that
$\sigma_\mathcal{F}^{\circ}\subset\sigma_G^{\circ}$ if and only if
$\mathcal{F}$ extends $\preccurlyeq_G$.
\end{proof}

For a connected building set $B$ on $[n]$ and a flag of subsets
$\mathcal{L}:\emptyset=I_0\subset I_1\subset\ldots\subset I_k=[n]$
denote by $B/\mathcal{L}$ the {\it quotient building set}

$$B/\mathcal{L}=\bigoplus_{j=1}^{k}\left(B|_{I_{j}}/I_{j-1}\right).$$
Define the rank of $\mathcal{L}$ according to $B$ as

$$\mathrm{rk}_B(\mathcal{L})=\sum_{j=1}^{k}\mathrm{rk}(B\mid_{I_j}/I_{j-1}).$$
Note that $\mathrm{rk}_B(\mathcal{L})=n-c(B/\mathcal{L})$, where
$c(B/\mathcal{L})$ is the number of components of the quotient
building set $B/\mathcal{L}$.

\begin{proposition}\label{ranks}
For a connected building set $B$ on $[n]$ and a flag $\mathcal{L}$
of subsets of $[n]$ the ranks $\mathrm{rk}_{P_B}(\mathcal{L})$ and
$\mathrm{rk}_B(\mathcal{L})$ coincide

$$\mathrm{rk}_{P_B}(\mathcal{L})=\mathrm{rk}_B(\mathcal{L}).$$
\end{proposition}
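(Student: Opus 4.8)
The plan is to compute both quantities explicitly and match them through the dimension of the face $G=\pi_{P_B}(\mathcal{L})$. Since $\mathrm{rk}_{P_B}(\mathcal{L})=\dim G$ by Definition \ref{Q-rank}, and since the note preceding the statement records $\mathrm{rk}_B(\mathcal{L})=n-c(B/\mathcal{L})$, it suffices to prove
$$\dim G = n-c(B/\mathcal{L}).$$
First I would realize $G$ as a Minkowski sum of coordinate simplices. Choose any lattice point $\omega$ in the relative interior $\sigma_{\mathcal{L}}^{\circ}$; then $G=G_\omega$ is the face of $P_B$ on which $\omega^{\star}$ is maximized. Because $P_B=\sum_{I\in B}\mathrm{Conv}\{e_i\mid i\in I\}$ and maximization of a linear functional distributes over Minkowski sums, $G=\sum_{I\in B}\mathrm{Conv}\{e_i\mid i\in M_I\}$, where $M_I=\{i\in I\mid \omega_i=\max_{j\in I}\omega_j\}$ is the set of $\omega$-maximal elements of $I$. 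As all elements of $M_I$ share the same $\omega$-value, and elements of equal $\omega$-value in $\sigma_{\mathcal{L}}^{\circ}$ are exactly those lying in a common block $I_p\setminus I_{p-1}$ of $\mathcal{L}$, each $M_I$ is contained in a single block; in fact $M_I=I\cap(I_{p}\setminus I_{p-1})$ for the largest index $p$ with $I\cap(I_p\setminus I_{p-1})\neq\emptyset$, for which moreover $I\subseteq I_p$.

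Next I would apply a dimension formula for Minkowski sums of coordinate simplices: if $\mathcal{S}$ is a family of nonempty subsets of $[n]$ with $\bigcup\mathcal{S}=[n]$, then $\dim\sum_{S\in\mathcal{S}}\mathrm{Conv}\{e_i\mid i\in S\}=n-c(\mathcal{S})$, where $c(\mathcal{S})$ is the number of connected components of the hypergraph on $[n]$ with edge set $\mathcal{S}$. This follows by computing the direction space of the affine hull as $\sum_{S}\mathrm{span}\{e_i-e_j\mid i,j\in S\}$, which equals $\{v\mid\sum_{i\in C}v_i=0\text{ for each component }C\}$ and hence has dimension $n-c(\mathcal{S})$. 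Applying this to $\mathcal{S}=\{M_I\mid I\in B\}$, which covers $[n]$ because every singleton occurs as $\{i\}=M_{\{i\}}$, yields $\dim G=n-c(\mathcal{S})$.

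It then remains to identify $c(\mathcal{S})$ with $c(B/\mathcal{L})$. Since every $M_I$ sits inside one block $I_j\setminus I_{j-1}$, the hypergraph $\mathcal{S}$ splits as a disjoint union over blocks, so $c(\mathcal{S})=\sum_{j}c(\mathcal{S}_j)$ with $\mathcal{S}_j=\{M_I\mid I\in B,\ M_I\subseteq I_j\setminus I_{j-1}\}$. The core combinatorial step is to check that, as families of subsets of $I_j\setminus I_{j-1}$, the members of $\mathcal{S}_j$ coincide with the nonempty elements of the minor building set $B|_{I_j}/I_{j-1}$: for $I\in B$ with top block $j$ one has $M_I=I\setminus I_{j-1}\in B|_{I_j}/I_{j-1}$ via the witness $S'=I\cap I_{j-1}$, and conversely every $K\in B|_{I_j}/I_{j-1}$ equals $M_{K\cup S'}$. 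Because the connected components of a building set are precisely its pairwise disjoint, covering maximal elements, the number of hypergraph components of $\mathcal{S}_j$ equals $c(B|_{I_j}/I_{j-1})$. Summing over $j$ gives $c(\mathcal{S})=\sum_j c(B|_{I_j}/I_{j-1})=c(B/\mathcal{L})$, whence $\dim G=n-c(B/\mathcal{L})=\mathrm{rk}_B(\mathcal{L})$.

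The main obstacle I anticipate is this last paragraph: unwinding the definition of the contraction $B|_{I_j}/I_{j-1}$ to see that its nonempty members are exactly the restrictions $M_I=I\setminus I_{j-1}$, and justifying that the hypergraph connectivity of a building set is governed by its maximal elements so that the component counts agree block by block. The Minkowski-sum dimension formula is standard but must be stated cleanly, and one should not forget that the presence of all singletons in $B$ guarantees $\bigcup\mathcal{S}=[n]$, so that no isolated coordinates are miscounted.
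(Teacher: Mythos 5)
Your proof is correct, but it takes a genuinely different route from the paper. The paper argues through the nested set description of the face lattice of $P_B$: it takes the nested set $N_G=\{J_1,\ldots,J_{n-m-1}\}\subset B\setminus\{[n]\}$ encoding the face $G=\pi_{P_B}(\mathcal{L})$ of dimension $m$, asserts the identity $c(B\mid_{I_p}/I_{p-1})=|\{J\in N_G\cup\{[n]\}\mid J^{\mathrm{root}}\subset I_p\setminus I_{p-1}\}|$ (left as ``a simple argumentation''), and concludes by noting that each $J\in N_G\cup\{[n]\}$ has its root set in exactly one block, so the component counts sum to $|N_G|+1=n-m$. You instead bypass the nested set machinery of \cite{FS} and \cite{P} entirely and work directly with the Minkowski realization $P_B=\sum_{I\in B}\mathrm{Conv}\{e_i\mid i\in I\}$: the face in direction $\omega\in\sigma_{\mathcal{L}}^{\circ}$ is the sum of the simplices on the argmax sets $M_I=I\setminus I_{p-1}$ (with $I\subseteq I_p$ for the top block $p$ of $I$, consistent with the orientation convention of $(\ref{braidcone})$, where coordinates increase along the flag), the standard dimension formula $\dim\sum_S\Delta_S=n-c(\mathcal{S})$ converts the question into hypergraph component counting, and your identification of $\{M_I\mid I\in B,\ M_I\subseteq I_j\setminus I_{j-1}\}$ with the nonempty members of $B\mid_{I_j}/I_{j-1}$ (via the witness $S'=I\cap I_{j-1}$, and conversely $K=M_{K\cup S'}$) is exactly right, as is the observation that hypergraph components of a building set are its pairwise disjoint maximal elements, since a union-closed connected family has its total union in the building set. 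What each approach buys: the paper's proof is shorter because Section 5 has already set up facets labelled by $B\setminus\{[n]\}$, nested sets, and the preorder lemma for $\pi_{P_B}$, whereas your argument is self-contained modulo two standard facts (faces of Minkowski sums are Minkowski sums of faces, and the affine-hull computation for sums of coordinate simplices), and it actually supplies full detail precisely where the paper is terse --- indeed your block-by-block identification of $\mathcal{S}_j$ with the minor $B\mid_{I_j}/I_{I_{j-1}}$, whose maximal elements correspond to the roots $J^{\mathrm{root}}$, can be read as a proof of the paper's unproved counting identity. Two cosmetic points worth making explicit in a write-up: $\mathcal{S}$ may contain repeated sets, which is harmless for connectivity, and the existence of a lattice point $\omega\in\mathbb{Z}_+^{n}\cap\sigma_{\mathcal{L}}^{\circ}$ is clear by assigning the value $j$ to the block $I_j\setminus I_{j-1}$.
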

\begin{proof}
Let $G$ be a proper face of $P_B$ of dimension $\mathrm{dim} G=m$
and $\mathcal{L}:\emptyset=I_0\subset I_1\subset\ldots\subset
I_k=[n]$ be a flag of subsets of $[n]$ such that
$\pi_{P_B}(\mathcal{L})=G$. To the face $G$ corresponds the nested
set $N_G=\{J_1,\ldots,J_{n-m-1}\}\subset B\setminus\{[n]\}$. A
simple argumentation shows that

$$c(B\mid_{I_p}/I_{p-1})=|\{J\in N_G\cup\{[n]\}\mid
J^{\mathrm{root}}\subset I_p\setminus I_{p-1}\}| \ \ \mbox{for
all} \ \ 1\leq p\leq k.$$ Since for any $J\in N_G\cup\{[n]\}$
there is a unique $p$ such that $J^{\mathrm{root}}\subset
I_p\setminus I_{p-1}$ we have

$$\sum_{j=1}^{k}c(B\mid_{I_j}/I_{j-1})=n-m.$$

\end{proof}

The following theorem gives an algebraic characterization of the
quasisymmetric enumerator function $F_q(P_B)$.

\begin{theorem}\label{coincides}
For a building set $B$ the quasisymmetric enumerator function
$F_q(P_B)$ associated to a nestohedron $P_B$ coincides with the
value of a universal morphism  $$F_q(P_B)=\Psi_q([B]).$$
\end{theorem}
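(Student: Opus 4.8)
The plan is to compute the monomial expansion of $\Psi_q([B])$ directly from the Aguiar--Bergeron--Sottile formula of Theorem \ref{fundamental} and match it, coefficient by coefficient, against the expansion of $F_q(P_B)$ coming from $(\ref{expansion})$. Recall that the latter reads $F_q(P_B)=\sum_{\alpha\models n}p_\alpha(q)M_\alpha$ with $p_\alpha(q)=\sum_{\mathcal{F}:\mathrm{type}(\mathcal{F})=\alpha}q^{\mathrm{rk}_{P_B}(\mathcal{F})}$. So everything reduces to identifying the coefficient of $M_\alpha$ in $\Psi_q([B])$ with $p_\alpha(q)$.

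First I would reduce to the case of a connected building set. Both sides are multiplicative. Since $\Psi_q$ is a morphism of algebras and $[B_1\sqcup B_2]=[B_1]\cdot[B_2]$, we get $\Psi_q([B_1\sqcup B_2])=\Psi_q([B_1])\Psi_q([B_2])$. On the geometric side $P_{B_1\sqcup B_2}=P_{B_1}\times P_{B_2}$, and $F_q$ is multiplicative on products of generalized permutohedra: the normal cone of a product at a face $G\times G'$ is $\sigma_G\times\sigma_{G'}$, so the $Q$-rank is additive, $\mathrm{rk}_{Q\times Q'}(\mathcal{F}_{(\omega,\omega')})=\mathrm{rk}_Q(\mathcal{F}_\omega)+\mathrm{rk}_{Q'}(\mathcal{F}_{\omega'})$, and the weight enumerator factors as a product in $QSym$. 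Hence it suffices to prove $F_q(P_B)=\Psi_q([B])$ for connected $B$.

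Second, I would compute the coefficients $\zeta_\alpha([B])$ of $\Psi_q([B])$. By Theorem \ref{fundamental}, $\zeta_\alpha([B])=\zeta_q^{\otimes k}\circ(p_{a_1}\otimes\cdots\otimes p_{a_k})\circ\Delta^{(k-1)}([B])$. Using the minor compatibility $(B|_{I_2})/I_1=(B/I_1)|_{I_2\setminus I_1}$ together with coassociativity, the iterated coproduct becomes a sum over weak chains $\emptyset=I_0\subseteq I_1\subseteq\cdots\subseteq I_k=[n]$,
$$\Delta^{(k-1)}([B])=\sum_{\mathcal{L}}[B|_{I_1}/I_0]\otimes[B|_{I_2}/I_1]\otimes\cdots\otimes[B|_{I_k}/I_{k-1}].$$
Applying the grading projections $p_{a_1}\otimes\cdots\otimes p_{a_k}$ forces $|I_j\setminus I_{j-1}|=a_j\geq 1$, so only strict flags $\mathcal{L}$ with $\mathrm{type}(\mathcal{L})=\alpha$ survive; applying $\zeta_q^{\otimes k}$ and using $\zeta_q([B|_{I_j}/I_{j-1}])=q^{\mathrm{rk}(B|_{I_j}/I_{j-1})}$ gives the product $\prod_{j=1}^{k}q^{\mathrm{rk}(B|_{I_j}/I_{j-1})}=q^{\mathrm{rk}_B(\mathcal{L})}$ by the very definition of $\mathrm{rk}_B(\mathcal{L})$. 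Therefore
$$\zeta_\alpha([B])=\sum_{\mathcal{L}:\mathrm{type}(\mathcal{L})=\alpha}q^{\mathrm{rk}_B(\mathcal{L})}.$$

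Finally I would match the two expressions. Comparing the last display with $p_\alpha(q)$, equality of the coefficients of $M_\alpha$ for every composition $\alpha\models n$ amounts precisely to the identity $\mathrm{rk}_{P_B}(\mathcal{L})=\mathrm{rk}_B(\mathcal{L})$ for all flags $\mathcal{L}$, which is Proposition \ref{ranks}. Summing over $\alpha$ then yields $\Psi_q([B])=\sum_\alpha p_\alpha(q)M_\alpha=F_q(P_B)$, completing the connected case and, via the multiplicative reduction above, the general statement. I expect the main obstacle to be the clean derivation of the iterated-coproduct formula: one must verify the restriction--contraction compatibility $(B|_{I_2})/I_1=(B/I_1)|_{I_2\setminus I_1}$ so that coassociativity genuinely produces a tensor product of quotient minors indexed by flags, and then confirm that the homogeneous projections isolate exactly the strict flags of prescribed type. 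The remaining steps are bookkeeping with the definitions and the already-established Proposition \ref{ranks}.
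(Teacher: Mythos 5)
Your proof is correct and follows essentially the same route as the paper: compute the coefficients of $\Psi_q([B])$ via Theorem \ref{fundamental} as sums of $q^{\mathrm{rk}_B(\mathcal{L})}$ over flags of prescribed type, then match against the expansion $(\ref{expansion})$ of $F_q(P_B)$ using Proposition \ref{ranks}. Your explicit reduction to connected $B$ (via multiplicativity of both $\Psi_q$ and $F_q$) and the spelled-out iterated-coproduct computation are details the paper leaves implicit, and the former is in fact a small improvement in rigor, since Proposition \ref{ranks} is stated only for connected building sets.
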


\begin{proof}

The morphism $\Psi_q$ is given by
$$\Psi_q([B])=\sum_{\alpha\models\mathrm{gr}(B)}(\zeta_q)_\alpha(B)M_\alpha.$$
By Theorem \ref{fundamental} the coefficient corresponding to a
composition $\alpha=(i_1,\ldots,i_k)\models n$ is determined by

\begin{equation}\label{eqn:coeff}
(\zeta_q)_\alpha(B)=\sum_{\mathcal{L}:\mathrm{type}(\mathcal{L})=\alpha}\prod_{j=1}^{k}q^{\mathrm{rk}(B\mid_{I_j}/I_{j-1})}
=\sum_{\mathcal{L}:\mathrm{type}(\mathcal{L})=\alpha}q^{\mathrm{rk}_B(\mathcal{L})},
\end{equation}
where the sum is over all chains $\mathcal{L}:\emptyset\subset
I_1\subset\ldots\subset I_k=V$ of the type $\alpha$ and
$\mathrm{rk}_B(\mathcal{L})$ is the rank of the flag $\mathcal{L}$
according to $B$. Thus

\begin{equation}\label{eqn:eqn3}
F_q(P_B)=\sum_{\mathcal{L}}q^{\mathrm{rk}_B(\mathcal{L})}M_{\mathrm{type}(\mathcal{L})},
\end{equation}
where the sum is over all chains of the ground set $[n]$. Theorem
follows according to the expansion $(\ref{expansion})$ and
Proposition \ref{ranks}.
\end{proof}

\begin{example}
The permutohedron $Pe^{n-1}=P_{B_n}$ is realized as the
nestohedron corresponding to the family $B_n$ of all subsets of
$[n]$. Since $\mathrm{rk}_{B_n}(\mathcal{L})=n-|\mathcal{L}|$ for
any chain $\mathcal{L}$ of subsets of $[n]$ we have by
$(\ref{eqn:eqn3})$ that
\[F_q(Pe^{n-1})=\sum_{\mathcal{L}}q^{n-|\mathcal{L}|}M_{\mathrm{type}(\mathcal{L})}=\sum_{\alpha\models n}{n \choose \alpha}q^{n-k(\alpha)}M_\alpha.\]
Consequently by Theorem \ref{general} we derive the well known
fact
\[f(Pe^{n-1},q)=\sum_{\alpha\models n}{n \choose
\alpha}q^{n-k(\alpha)}.\]
\end{example}

\begin{example}
For the building set $B=\{\{1\},\ldots,\{n\},[n]\}$ on $[n]$ the
corresponding nestohedron is the $(n-1)$-simplex
$P_B=\Delta^{n-1}$. Let $\mathcal{L}$ be a chain of the type
$\mathrm{type}(\mathcal{L})=\alpha\models n$. Obviously
$\mathrm{rk}_B(\mathcal{L})=l(\alpha)-1$, where $l(\alpha)$
denotes the last component of the composition $\alpha\models n$.
Therefore by $(\ref{eqn:coeff})$ we have $(\zeta_q)_\alpha(B)={n
\choose \alpha}q^{l(\alpha)-1},$ and consequently

\[F_q(\Delta^{n-1})=\sum_{\alpha\models n}{n \choose
\alpha}q^{l(\alpha)-1}M_\alpha.\] By rearranging summands
according to last components of compositions we obtain

\[F_q(B)=\sum_{k=1}^{n}{n \choose k}q^{k-1}\sum_{\alpha\models
n-k}{n-k \choose \alpha}M_{(\alpha,k)}.\] Taking into account that
$M_{(1)}^{n}=\sum_{\alpha\models n}{n \choose \alpha}M_\alpha,$
for each $n$ we have

\[F_q(\Delta^{n-1})=\sum_{k=1}^{n}{n \choose
k}q^{k-1}(M_{(1)}^{n-k})_k.\] Theorem \ref{general} gives the
expected

\[f(\Delta^{n-1},q)=\sum_{k=1}^{n}{n \choose
k}q^{k-1}=\frac{(1+q)^{n}-1}{q}.\]

\end{example}



\subsection{Recursive behavior of $F_q(P_B)$}

For a composition $\alpha=(a_1,\ldots,a_k)$ and a positive integer
$r$ let $(\alpha,r)=(a_1,\ldots,a_k,r)$. Define a shifting
operator $F\mapsto (F)_r$ on $QSym$ as the linear extension of the
map given on the monomial basis by $M_\alpha\mapsto
M_{(\alpha,r)}$. Specially
$(M_\emptyset)_r=M_{(r)}=x_1^{r}+x_2^{r}+\cdots$.




The next theorem shows that the weighted quasisymmetric enumerator
function for nestohedra $F_q(P_B)$ is determined by recurrence
relations.


\begin{theorem}\label{reccurence}
The quasisymmetric function $F_q(B)=F_q(P_B)$ is determined by the
following recurrence relations
\begin{item}
\item[(1)] $F_q(\bullet)=M_{(1)}$ for the singleton
$\bullet=\{\{1\}\}.$

\item[(2)] If $B=B_1\sqcup B_2$ then $F_q(B)=F_q(B_1)F_q(B_2).$

\item[(3)] If $B$ is connected then $F_q(B)=\sum_{I\varsubsetneq
[n]}q^{n-|I|-1}(F_q(B\mid_I))_{n-|I|}.$

\end{item}
\end{theorem}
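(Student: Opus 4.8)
The plan is to extract all three relations from the monomial expansion
$$F_q(B)=\sum_{\mathcal{L}}q^{\mathrm{rk}_B(\mathcal{L})}M_{\mathrm{type}(\mathcal{L})}$$
of $(\ref{eqn:eqn3})$, where $\mathcal{L}$ runs over the chains $\emptyset=I_0\subset\cdots\subset I_k=[n]$ of the ground set; this expansion holds for every building set, connected or not, which matters because the minors $B\mid_I$ appearing in (3) need not be connected. For relation (2) it is cleaner to invoke Theorem \ref{coincides} directly.

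Relation (1) is immediate: for the singleton $\bullet$ the only chain is $\emptyset\subset\{1\}$, of type $(1)$ and rank $\mathrm{rk}(\bullet)=1-1=0$, so the expansion collapses to $F_q(\bullet)=M_{(1)}$. For relation (2) I would use that $[B_1\sqcup B_2]=[B_1]\cdot[B_2]$ in $\mathcal{B}$ and that the universal morphism $\Psi_q$ is in particular an algebra morphism; Theorem \ref{coincides} then gives $F_q(B)=\Psi_q([B_1])\,\Psi_q([B_2])=F_q(B_1)F_q(B_2)$, which packages the quasi-shuffle bookkeeping that a direct chain argument would otherwise force.

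Relation (3) is the substantive one, and I would prove it by grouping the chains in the expansion according to their penultimate set $I:=I_{k-1}\varsubsetneq[n]$. Deleting the top element $[n]$ carries $\mathcal{L}$ to a chain $\mathcal{L}':\emptyset=I_0\subset\cdots\subset I_{k-1}=I$ of the smaller ground set $I$, and $\mathcal{L}\mapsto(I,\mathcal{L}')$ is a bijection onto pairs consisting of a proper subset $I$ and a chain of $I$ ending at $I$; the shortest chain $\emptyset\subset[n]$ corresponds to $I=\emptyset$ together with the empty chain, under the convention that the empty building set has enumerator $M_\emptyset=1$. Two splittings then do the work. The type factors as $\mathrm{type}(\mathcal{L})=(\mathrm{type}(\mathcal{L}'),n-|I|)$, so $M_{\mathrm{type}(\mathcal{L})}=(M_{\mathrm{type}(\mathcal{L}')})_{n-|I|}$ by definition of the shifting operator. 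The rank splits additively: its top summand is $\mathrm{rk}(B\mid_{[n]}/I)=\mathrm{rk}(B/I)$, while the remaining summands reassemble into $\mathrm{rk}_{B\mid_I}(\mathcal{L}')$ because $(B\mid_I)\mid_{I_j}=B\mid_{I_j}$ for every $I_j\subset I$. Substituting both facts into the expansion and using linearity of $F\mapsto(F)_r$, the inner sum over $\mathcal{L}'$ collapses to $(F_q(B\mid_I))_{n-|I|}$, giving
$$F_q(B)=\sum_{I\varsubsetneq[n]}q^{\mathrm{rk}(B/I)}(F_q(B\mid_I))_{n-|I|}.$$

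It then remains only to identify the exponent, and this is where connectedness of $B$ enters and where I expect the single genuine subtlety to lie. Since $B$ is connected we have $[n]\in B$, so taking $S'=I$ in the definition of the contraction gives $([n]\setminus I)\cup I=[n]\in B$, i.e. $[n]\setminus I\in B/I$; thus $B/I$ is a connected building set on the $(n-|I|)$-element ground set $[n]\setminus I$, whence $\mathrm{rk}(B/I)=(n-|I|)-1=n-|I|-1$. This turns the displayed identity into relation (3). The main obstacle is therefore the rank bookkeeping in relation (3)—the additive splitting of $\mathrm{rk}_B(\mathcal{L})$ across the penultimate set and the connectivity computation $\mathrm{rk}(B/I)=n-|I|-1$—after which the shifting operator packages the leftover type data automatically.
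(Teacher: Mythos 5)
Your proof is correct and takes essentially the same route as the paper's: part (3) is obtained, exactly as in the paper, by regrouping the chain expansion $(\ref{eqn:eqn3})$ according to the penultimate set $I=I_{k-1}$ of each chain, splitting $\mathrm{rk}_B(\mathcal{L})$ into the top term $\mathrm{rk}(B/I)=n-|I|-1$ (the paper likewise notes that connectedness of $B$ forces $B/I$ to be connected) plus $\mathrm{rk}_{B\mid_I}(\mathcal{L}')$, and then recognizing the inner sum as the shifted $(F_q(B\mid_I))_{n-|I|}$. The only cosmetic difference is that for (2) you invoke Theorem \ref{coincides} and the algebra-morphism property of $\Psi_q$ where the paper reads the identity off directly from the definition of $F_q$; both are immediate, and your explicit treatment of the $I=\emptyset$ case and of the possibly disconnected minors $B\mid_I$ supplies details the paper leaves tacit.
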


\begin{proof}
The assertions (1) and (2) are direct consequences of definition
of the enumerator $F_q(B)$. It remains to prove the assertion (3).
Note that for connected $B$ the contraction $B/I$ remains
connected for each $I\subset [n]$ and $\mathrm{rk}(B/I)=n-|I|-1$.
If we rearrange the sum in the expansion $(\ref{eqn:eqn3})$
according to predecessors of the maximal elements in chains we
obtain

\[F_q(B)=\sum_{I\subsetneq
[n]}q^{n-|I|-1}\sum_{\mathcal{L}_I}q^{\mathrm{rk}_{B\mid_I}(\mathcal{L}_I)}M_{(\mathrm{type}(\mathcal{L}_{I}),n-|I|)},\]
where the last sum is over all chains $\mathcal{L}_I$ of $I$. This
leads, by repeated application of equation $(\ref{eqn:eqn3})$ to
the needed identity.
\end{proof}

The recursive behavior of the enumerator $F_q(P_B)$ together with
Theorem \ref{general} reproves the recursive behavior of
$f$-polynomials of nestorhedra.

\begin{theorem}[\cite{P}, Theorem 7.11]\label{thm:thm2}
The $f$-polynomial $f(B,q)$ of a nestohedron $P_B$ is determined
by the following recurrence relations
\begin{item}
\item[(1)] $f(\bullet,q)=1$ for the singleton $\bullet=\{\{1\}\}.$

\item[(2)] If $B=B_1\sqcup B_2$ then $f(B,q)=f(B_1,q)f(B_2,q).$

\item[(3)] If $B$ is connected then $f(B,q)=\sum_{I\varsubsetneq
[n]}q^{n-|I|-1}f(B\mid_I,q).$

\end{item}
\end{theorem}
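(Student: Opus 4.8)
The statement to prove is Theorem~\ref{thm:thm2}, recovering Postnikov's recurrence for the $f$-polynomial of a nestohedron. The crucial observation is that the paper has just established in Theorem~\ref{reccurence} the analogous recurrence for the weighted quasisymmetric enumerator $F_q(P_B)$, and in Theorem~\ref{general} a bridge $f(Q,q)=(-1)^{n}\mathbf{ps}(F_{-q}(Q))(-1)$ between $f$-polynomials and enumerators. So the natural plan is to push the three parts of the $F_q$-recurrence through the specialization $F\mapsto(-1)^{n}\mathbf{ps}(F_{-q})(-1)$ and check each part lands on the claimed $f$-polynomial recurrence.

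**Part (1)** is immediate: $F_q(\bullet)=M_{(1)}$, and $\mathbf{ps}(M_{(1)})(-1)=-1$ by~\eqref{-1}, so with $n=1$ we get $(-1)^{1}(-1)=1=f(\bullet,q)$.

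**Part (2)** uses multiplicativity. Since $\Psi$ is a Hopf-algebra morphism, $F_q$ is multiplicative: $F_q(B_1\sqcup B_2)=F_q(B_1)F_q(B_2)$. Both $\mathbf{ps}$ and evaluation at $-1$ are algebra homomorphisms $QSym\to\mathbf{k}(q)$, and if $B_1,B_2$ have ground sets of sizes $n_1,n_2$ with $n_1+n_2=n$ the sign $(-1)^{n}$ factors as $(-1)^{n_1}(-1)^{n_2}$. Hence $f(B,q)=f(B_1,q)f(B_2,q)$.

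So the real content, and the step I expect to be the main obstacle, is **Part (3)**.

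(continued below)

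**Part (3) — the substantive step.** I start from Theorem~\ref{reccurence}(3),
$$F_q(B)=\sum_{I\subsetneq[n]}q^{\,n-|I|-1}\bigl(F_q(B\mid_I)\bigr)_{n-|I|},$$
and I want to apply $f(B,q)=(-1)^{n}\mathbf{ps}(F_{-q}(B))(-1)$. First I replace $q$ by $-q$ throughout, so the prefactor becomes $(-q)^{\,n-|I|-1}$ and the inner enumerator becomes $F_{-q}(B\mid_I)$. The heart of the matter is to understand how the shifting operator $(\,\cdot\,)_{r}$, with $r=n-|I|$, interacts with $\mathbf{ps}(\,\cdot\,)(-1)$. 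On the monomial basis $(M_\alpha)_r=M_{(\alpha,r)}$, and by~\eqref{-1} we have $\mathbf{ps}(M_{(\alpha,r)})(-1)=(-1)^{k(\alpha)+1}=-\,\mathbf{ps}(M_\alpha)(-1)$, independently of $r$. Extending linearly, $\mathbf{ps}\bigl((F)_r\bigr)(-1)=-\,\mathbf{ps}(F)(-1)$ for every $F\in QSym$ and every $r$. This single identity is what makes the recurrence collapse cleanly, and verifying it (together with bookkeeping the signs) is where the care is needed.

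**Assembling the signs.** Writing $m=|I|$ so that $B\mid_I$ has ground set of size $m$, I apply the above to $F=F_{-q}(B\mid_I)$ and get $\mathbf{ps}\bigl((F_{-q}(B\mid_I))_{n-m}\bigr)(-1)=-\,\mathbf{ps}(F_{-q}(B\mid_I))(-1)=-(-1)^{m}f(B\mid_I,q)$ by Theorem~\ref{general} applied to the restriction $B\mid_I$. Since $\mathbf{ps}(\,\cdot\,)(-1)$ is linear, applying it to the recurrence and multiplying by $(-1)^{n}$ yields
$$f(B,q)=(-1)^{n}\sum_{I\subsetneq[n]}(-q)^{\,n-m-1}\bigl(-(-1)^{m}f(B\mid_I,q)\bigr).$$
Now I collect signs: $(-q)^{\,n-m-1}=(-1)^{\,n-m-1}q^{\,n-m-1}$, and the total sign is $(-1)^{n}\cdot(-1)^{\,n-m-1}\cdot(-1)\cdot(-1)^{m}=(-1)^{\,2n}=1$. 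Thus every term reduces to $q^{\,n-m-1}f(B\mid_I,q)=q^{\,n-|I|-1}f(B\mid_I,q)$, giving exactly part~(3). The only subtlety to watch is that the sum in Theorem~\ref{reccurence}(3) ranges over proper subsets $I\subsetneq[n]$ including $I=\emptyset$ (for which $B\mid_\emptyset$ is empty and the convention $f(\varnothing,q)=1$ must be consistent with the base case), so I would remark that the $I=\emptyset$ term contributes the constant $q^{\,n-1}$ correctly. With parts~(1),~(2),~(3) all verified, the theorem follows, and this gives a purely Hopf-algebraic reproof of Postnikov's recurrence.
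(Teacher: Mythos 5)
Your proposal is correct and is exactly the paper's route: the paper proves Theorem~\ref{thm:thm2} by a one-line remark that it follows from the recurrence for $F_q(P_B)$ (Theorem~\ref{reccurence}) combined with the specialization $f(Q,q)=(-1)^{n}\mathbf{ps}(F_{-q}(Q))(-1)$ of Theorem~\ref{general}, which is precisely what you carry out. Your sign bookkeeping, in particular the key identity $\mathbf{ps}\bigl((F)_r\bigr)(-1)=-\,\mathbf{ps}(F)(-1)$ coming from $(\ref{-1})$, correctly fills in the details the paper leaves implicit.
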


\section{Graph-associahedra}

A special class of building sets is produced by simple graphs. The
graphical building set $B(\Gamma)$ on a graph $\Gamma$ is the
collection of all subsets of vertices such that induced subgraphs
are connected. The polytope $P_\Gamma=P_{B(\Gamma)}$ is called a
{\it graph-associahedron}.

In \cite{GV1} is considered the following Hopf algebra of graphs.
Let $\mathcal{G}$ be a vector space over the field $\mathbf{k}$
spanned by isomorphism classes of simple graphs. It is endowed
with a Hopf algebra structure by operations

\[[\Gamma_1]\cdot[\Gamma_2]=[\Gamma_1\sqcup\Gamma_2] \ \mbox{and} \
\Delta([\Gamma])=\sum_{I\subset
V}[\Gamma\mid_I]\otimes[\Gamma/I],\] where $\Gamma\mid_I$ is the
induced subgraph on $I$ and $\Gamma/I$ is the induced subgraph on
$V\setminus I$ with additional edges $uv$ for all pairs of
vertices $u,v\notin I$ connected by edge paths through $I$. The
correspondence $\Gamma\mapsto B(\Gamma)$ defines a Hopf
monomorphism from $\mathcal{G}$ to $\mathcal{B}$.

The induced character on $\mathcal{G}$ over the field of rational
functions $\mathbf{k}(q)$ is given by
$\zeta_q(\Gamma)=q^{\mathrm{rk}(\Gamma)}$, where
$\mathrm{rk}(\Gamma)=\mathrm{gr}(\Gamma)-c(\Gamma)$ with
$\mathrm{gr}(\Gamma)$ and $c(\Gamma)$ being the numbers of
vertices and connected components of the graph $\Gamma$. This
results in a $q$-analog $F_q(\Gamma)$ of the quasisymmetric
function invariant $F(\Gamma)=F(P_\Gamma)$ of the graph $\Gamma$
introduced and studied in \cite{GV1}. By formula
$(\ref{eqn:eqn3})$ the qusisymmetric function $F_q(\Gamma)$ is
described by

\begin{equation}\label{qgraph}
F_q(\Gamma)=\sum_{\mathcal{L}}q^{\mathrm{rk}_\Gamma(\mathcal{L})}M_{\mathrm{type}(\mathcal{L})},
\end{equation}
where
$\mathrm{rk}_\Gamma(\mathcal{L})=\sum_{j=1}^{k}\mathrm{rk}(\Gamma\mid_{I_j}/I_{j-1})$
for a chain $\mathcal{L}:\emptyset\subset I_1\subset\cdots\subset
I_k=V$.

Theorem \ref{reccurence} applied on the case of graph-associahedra
gives the following recurrence behavior of $F_q(\Gamma)$.

\begin{proposition}\label{recgraph}
The quasisymmetric function $F_q(\Gamma)$ associated to a simple
graph $\Gamma$ satisfies

\begin{item}
\item[(1)] $F_q(\bullet)=M_{(1)}$ for the one-vertex graph,

\item[(2)] If $\Gamma=\Gamma_1\sqcup \Gamma_2$ then
$F_q(\Gamma)=F_q(\Gamma_1)F_q(\Gamma_2)$,

\item[(3)] If $\Gamma$ is connected then
$F_q(\Gamma)=\sum_{I\varsubsetneq
V}q^{|V|-|I|-1}(F_q(\Gamma\mid_I))_{|V|-|I|}.$
\end{item}
\end{proposition}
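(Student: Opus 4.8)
The plan is to derive all three relations directly from Theorem \ref{reccurence} by transporting them through the correspondence $\Gamma\mapsto B(\Gamma)$. Since a graph-associahedron is by definition $P_\Gamma=P_{B(\Gamma)}$, we have $F_q(\Gamma)=F_q(P_{B(\Gamma)})=F_q(B(\Gamma))$, so it suffices to rewrite the building-set recurrence of Theorem \ref{reccurence} in graph language. Concretely, I would check the compatibility of the three operations occurring in that theorem — singletons, disjoint union, and restriction, together with the connectedness condition — with their graph-theoretic counterparts, and then substitute.

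The first two relations are immediate. The one-vertex graph $\bullet$ corresponds to the singleton building set $\{\{1\}\}$, so relation (1) is exactly Theorem \ref{reccurence}(1). For relation (2), I would note that an induced subgraph on a nonempty $J\subseteq V_1\sqcup V_2$ is connected precisely when $J$ lies entirely in $V_1$ or entirely in $V_2$; hence $B(\Gamma_1\sqcup\Gamma_2)=B(\Gamma_1)\sqcup B(\Gamma_2)$, and relation (2) follows from the multiplicativity in Theorem \ref{reccurence}(2). This is the same computation underlying the fact, quoted above, that $\Gamma\mapsto B(\Gamma)$ is an algebra morphism.

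For the main relation (3) I would apply Theorem \ref{reccurence}(3) to $B=B(\Gamma)$ and convert it using two observations. First, $\Gamma$ is connected if and only if $B(\Gamma)$ is connected, since the latter means $V\in B(\Gamma)$, i.e. that $\Gamma\mid_V=\Gamma$ is connected; so the hypothesis transfers correctly. Second, restriction intertwines the correspondence: for $I\subseteq V$,
$$B(\Gamma)\mid_I=\{J\subseteq I:\Gamma\mid_J\text{ connected}\}=B(\Gamma\mid_I),$$
using $(\Gamma\mid_I)\mid_J=\Gamma\mid_J$ whenever $J\subseteq I$. Replacing each $F_q(B(\Gamma)\mid_I)$ by $F_q(B(\Gamma\mid_I))=F_q(\Gamma\mid_I)$ in the sum of Theorem \ref{reccurence}(3) (over $I\varsubsetneq V$, with $n=|V|$) yields exactly $\sum_{I\varsubsetneq V}q^{|V|-|I|-1}(F_q(\Gamma\mid_I))_{|V|-|I|}$, which is relation (3).

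Since the whole argument is a translation, I expect no genuine obstacle; the only points deserving explicit verification are the restriction identity $B(\Gamma)\mid_I=B(\Gamma\mid_I)$ — equivalently, that the Hopf monomorphism $\mathcal{G}\hookrightarrow\mathcal{B}$ respects restriction — and the one-line check matching connectedness of $\Gamma$ with that of $B(\Gamma)$. The rank bookkeeping requires no separate attention, as it is already absorbed into the defining formula $(\ref{qgraph})$ for $F_q(\Gamma)$.
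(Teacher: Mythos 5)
Your proposal is correct and is precisely the paper's route: the paper offers no separate proof beyond the remark that Theorem \ref{reccurence} applied to graph-associahedra yields the proposition, relying on the quoted fact that $\Gamma\mapsto B(\Gamma)$ is a Hopf monomorphism. Your explicit verifications — $B(\Gamma_1\sqcup\Gamma_2)=B(\Gamma_1)\sqcup B(\Gamma_2)$, $B(\Gamma)\mid_I=B(\Gamma\mid_I)$, and the equivalence of connectedness of $\Gamma$ with $V\in B(\Gamma)$ — are exactly the translation details the paper leaves implicit.
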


\noindent The Proposition \ref{recgraph} includes the recurrence
formula for the special case $q=0$ obtained in \cite[Theorem
7.4]{GV1}

\begin{corollary}\label{deletion} The quasisymmetric graph
invariant $F(\Gamma)$ satisfies
\begin{item}
\item[(1)] $F(\bullet)=M_{(1)}$ for the one-vertex graph,

\item[(2)] If $\Gamma=\Gamma_1\sqcup \Gamma_2$ then
$F(\Gamma)=F(\Gamma_1)F(\Gamma_2)$,

\item[(3)] If $\Gamma$ is connected then $F(\Gamma)=\sum_{v\in
V}(F_{\Gamma\setminus v})_1,$ where $\Gamma\setminus v$ is the
induced graph on $V\setminus\{v\}$.
\end{item}
\end{corollary}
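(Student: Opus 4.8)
The plan is to derive Corollary \ref{deletion} directly as the specialization of Proposition \ref{recgraph} at $q=0$, using that $F(\Gamma)=F_0(\Gamma)$ by definition. Since the corollary is literally the three recurrence relations of the proposition evaluated at $q=0$, the whole task reduces to understanding how each part behaves under this substitution.

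First I would dispatch assertions (1) and (2), which involve no explicit power of $q$: setting $q=0$ in parts (1) and (2) of Proposition \ref{recgraph} immediately yields $F(\bullet)=M_{(1)}$ and $F(\Gamma)=F(\Gamma_1)F(\Gamma_2)$ whenever $\Gamma=\Gamma_1\sqcup\Gamma_2$, because both expressions are $q$-free.

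The substantive step is (3). Specializing the recurrence
$$F_q(\Gamma)=\sum_{I\varsubsetneq V}q^{|V|-|I|-1}(F_q(\Gamma\mid_I))_{|V|-|I|}$$
at $q=0$, every factor $q^{|V|-|I|-1}$ vanishes except when its exponent is zero, that is, except when $|I|=|V|-1$. The proper subsets of cardinality $|V|-1$ are precisely the complements of single vertices, $I=V\setminus\{v\}$ for $v\in V$; for each of these the shifting index is $|V|-|I|=1$, so the operator $(\cdot)_{|V|-|I|}$ reduces to $(\cdot)_1$, and the restriction $\Gamma\mid_{V\setminus\{v\}}$ coincides with the vertex-deletion $\Gamma\setminus v$. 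Retaining only these $|V|$ surviving summands gives
$$F(\Gamma)=\sum_{v\in V}(F(\Gamma\setminus v))_1,$$
which is exactly assertion (3).

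I expect no genuine obstacle here; the only points requiring care are the convention $q^0=1$, which guarantees that each surviving term contributes with coefficient $1$ rather than being annihilated, and the routine identification $\Gamma\mid_{V\setminus\{v\}}=\Gamma\setminus v$ between the restriction notation of Proposition \ref{recgraph} and the vertex-deletion notation of the corollary.
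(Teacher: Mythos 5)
Your proposal is correct and is exactly the paper's route: the paper presents Corollary \ref{deletion} as the $q=0$ specialization of Proposition \ref{recgraph}, and your verification that only the subsets $I=V\setminus\{v\}$ (with exponent $|V|-|I|-1=0$ and shift index $|V|-|I|=1$) survive the substitution is the intended, and correct, content of that step. The minor points you flag --- $q^{0}=1$ and the identification $\Gamma\mid_{V\setminus\{v\}}=\Gamma\setminus v$ --- are handled properly, so nothing is missing.
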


\subsection{Graphs with the same weighted enumerator $F_q(\Gamma)$}

The following nonisomorphic graphs on six vertices with the same
quasisymmetric invariant $F(\Gamma)$ were found in \cite[Example
7.5]{GV1}, see Figure \ref{same}. We obtain from Corollary
\ref{deletion}

\[F(\Gamma_1)=F(\Gamma_2)=720M_{(1,1,1,1,1,1)}+96M_{(2,1,1,1,1)}+24M_{(1,2,1,1,1)}.\]

\begin{figure}[h!]\centering
\begin{tikzpicture}[scale=.6]

\draw (-.75,0)--(0,2)--(.75,0)--(0,-2)--(-.75,0);\draw
(-.75,0)--(.75,0); \draw (-.75,0)--(-1.5,.5); \draw
(.75,0)--(1.5,.5); \draw
(0,-2)--(-1.5,.5)--(0,2)--(1.5,.5)--(0,-2);
\draw[fill](-.75,0)circle(2pt);\draw[fill](.75,0)circle(2pt);
\draw[fill](0,2)circle(2pt);\draw[fill](0,-2)circle(2pt);\draw[fill](-1.5,.5)circle(2pt);
\draw[fill](1.5,.5)circle(2pt);\node[right] at
(1.5,-2){$\Gamma_1$};

\draw (6.25,0)--(7,2)--(7.75,0);\draw (7,-2)--(6.25,0); \draw
(6.25,0)--(7.75,0); \draw (6.25,0)--(5.5,.5); \draw
(7.75,0)--(8.5,.5); \draw (7,-2)--(5.5,.5)--(7,2);\draw
(8.5,.5)--(7,2);\draw (8.5,.5)--(7,-2);
\draw[dashed](7.75,0)--(5.5,.5);
\draw[fill](6.25,0)circle(2pt);\draw[fill](7.75,0)circle(2pt);
\draw[fill](7,2)circle(2pt);\draw[fill](7,-2)circle(2pt);\draw[fill](5.5,.5)circle(2pt);
\draw[fill](8.5,.5)circle(2pt);\node[right] at
(8.5,-2){$\Gamma_2$};

\end{tikzpicture}
\caption{Graphs with $F(\Gamma_1)=F(\Gamma_2)$} \label{same}
\end{figure}
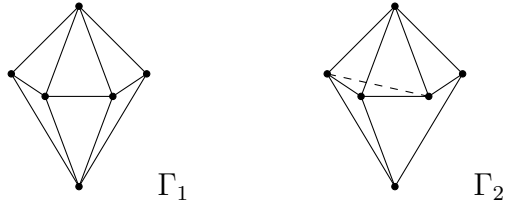

By applying Proposition \ref{recgraph} we obtain that
$F_q(\Gamma)$ of these graphs are also equal. In Table \ref{table}
are given coefficients of $F_q(\Gamma_1)=F_q(\Gamma_2)$ by the
monomials $q^{k}, k=0,1,2,3,4,5$.

\begin{table}\centering
\begin{tabular}{|c|c|}
\hline
  $q^{5}$ & $M_{(6)}$ \\
  \hline
  $q^{4}$ &
  $6M_{(1,5)}+6M_{(5,1)}+11M_{(2,4)}+15M_{(4,2)}+18M_{(3,3)}$\\
  \hline
  $q^{3}$ &
  $30M_{(1,1,4)}+30M_{(1,4,1)}+30M_{(4,1,1)}+66M_{(2,2,2)}$\\
   & $+4M_{(2,4)}+2M_{(3,3)}+56M_{(1,2,3)}+60M_{(1,3,2)}+$\\
   & $+44M_{(2,1,3)}+44M_{(2,3,1)}+54M_{(3,1,2)}+54M_{(3,2,1)}$
\\
\hline
  $q^{2}$ &
  $108M_{(3,1,1,1)}+120M_{(1,3,1,1)}+120M_{(1,1,3,1)}+$\\
  & $+120M_{(1,1,1,3)}+180M_{(1,1,2,2)}+168M_{(1,2,1,2)}+$\\
  & $+168M_{(1,2,2,1)}+132M_{(2,1,1,2)}+132M_{(2,1,2,1)}+$\\
  & $+132M_{(2,2,1,1)}+4M_{(1,2,3)}+16M_{(2,1,3)}+$\\
& $+16M_{(2,3,1)}+6M_{(3,1,2)}+6M_{(3,2,1)}+24M_{(2,2,2)}$\\
\hline
  $q^{1}$ &
  $360M_{(1,1,1,1,2)}+360M_{(1,1,1,2,1)}+360M_{(1,1,2,1,1)}+$\\
  & $+336M_{(1,2,1,1,1,)}+264M_{(2,1,1,1,1)}+12M_{(1,2,1,2)}+$\\
  & $+12M_{(1,2,2,1)}+48M_{(2,1,1,2)}+48M_{(2,1,2,1)}+$\\
  & $+48M_{(2,2,1,1)}+12M_{(3,1,1,1)}$ \\
\hline
  $q^{0}$ & $720M_{(1,1,1,1,1)}+96M_{(2,1,1,1,1)}+24M_{(1,2,1,1,1)}$ \\
  \hline
\end{tabular}
\caption{$F_q(\Gamma_1)=F_q(\Gamma_2)$}\label{table}
\end{table}

Theorem \ref{general} then implies that $f$-polynomials of
corresponding graph-associahedra are equal. We found
$$f(P_{\Gamma_1},q)=f(P_{\Gamma_2},q)=q^{5}+56q^{4}+462q^{3}+1308q^{2}+1500q+600.$$

Nevertheless the polytopes $P_{\Gamma_i}, i=1,2$ are not
combinatorially equivalent. We can show this by looking at the
$1$-skeletons of dual polytopes
$G_i=((P_{\Gamma_i})^{\ast})^{(1)}, i=1,2.$ These graphs have
different sequences of vertex degrees, see Tables \ref{degrees1}
and \ref{degrees2}. Here $d$ stands for the vertex degree and $n$
for the number of vertices with this degree. Recall that dual
polytopes of nestohedra are represented by nested set complexes,
see \cite[Theorem 6.5]{PRW}. Therefore $G_i, i=1,2$ are graphs
whose vertices correspond to connected induced subgraphs of
$\Gamma_i, i=1,2$ and edges are given by pairs of connected
induced subgraphs $\{H_1,H_2\}$ such that either one is contained
into another or their union is not connected.

\begin{table}[h!]\centering
\begin{tabular}{|c|c|c|c|c|c|c|c|c|c|c|c|}
\hline
  d & 30 & 29 & 28 & 26 & 25 & 16 & 15 & 14 & 13 & 12 & 11 \\
  \hline
  n & 4 & 2 & 2 & 2 & 2 & 9 & 9 & 3 & 4 & 6 & 13\\
  \hline
\end{tabular}
\caption{Vertex degrees of $G_1$}\label{degrees1}
\end{table}

\begin{table}[h!]\centering
\begin{tabular}{|c|c|c|c|c|c|c|c|c|c|c|}
\hline
  d & 30 & 29 & 28 & 26 & 16 & 15 & 14 & 13 & 12 & 11 \\
  \hline
  n & 2 & 4 & 2 & 4 & 10 & 6 & 4 & 6 & 6 & 12\\
  \hline
\end{tabular}
\caption{Vertex degrees of $G_2$}\label{degrees2}
\end{table}

\begin{remark}

By using MathLab programm we found exactly three pairs of
six-vertex graphs with the same quasisymmetric function
$F(\Gamma)$. If we label adequately the vertices then the sets of
edges of graphs given in Figure \ref{same} are
$E(\Gamma_1)=\{12,13,14,15,23,34,45,26,36,46,56\}$ and
$E(\Gamma_2)=\{12,13,14,15,23,24,34,45,26,36,56\}$. Two other
pairs are obtained by deleting edges $14$ and $14, 26$ from
$\Gamma_1$ and $\Gamma_2$ respectively. Similarly as for the pair
$\Gamma_1$ and $\Gamma_2$ it can be shown that the other two pairs
have equal $q$-analogs $F_q(\Gamma)$ and henceforth equal
$f$-polynomials of corresponding $P_\Gamma$, which are also
combinatorially nonequivalent polytopes.
\end{remark}

\section{Matroid base polytope}

A standard reference monograph for matroid theory is \cite{O}. The
most adequate for our purposes is a definition of a matroid as a
collection of bases. A matroid $M=([n],\mathcal{B})$ on the ground
set $[n]$ is a nonempty collection $\mathcal{B}$ of subsets of
$[n]$ such that the exchange property is satisfied:

\begin{itemize}
\item For each $B_1, B_2\in\mathcal{B}$ and $i\in B_1\setminus
B_2$ there is $j\in B_2\setminus B_1$ such that
$B_1\setminus\{i\}\cup\{j\}\in\mathcal{B}$.
\end{itemize}
The members of $\mathcal{B}$ are called bases of $M$ and we denote
the collection of bases of $M$ by $\mathcal{B}(M)$. All bases have
the same number of elements which is called the rank $r(M)$ of the
matroid $M$.

\begin{definition} The matroid base polytope is a convex polytope
$$P_M=\mathrm{Conv}\left\{e_B \mid B\in\mathcal{B}(M)\right\},$$
where $e_B=\displaystyle\sum_{i\in B}e_i$ and $e_i, i=1,\ldots,n$
are the standard basis vectors in $\mathbb{R}^n.$
\end{definition}
The polytope $P_M$ is contained in the hypersimplex
$$\Delta_r^{n}=[0,1]^{n}\cap\{x_1+x_2+\cdots+x_n=r(M)\}.$$
The following characterization of matroids in terms of their base
polytopes shows that matroid base polytopes are generalized
permutohedra.

\begin{theorem}[\cite{GelSer}, Section 2.2, Theorem 1 and \cite{GGMS}, Theorem 4.1]\label{characterization}
Let $\mathcal{S}$ be a collection of $r$-subsets of $[n]$ and
$P_\mathcal{S}$ be a convex polytope in $\mathbb{R}^n$
$$P_\mathcal{S}=\mathrm{Conv}\left\{e_S\mid S\in
\mathcal{S}\right\},$$ where $e_S=\sum_{i\in S}e_i$. Then
$\mathcal{S}$ is the collection of bases of a matroid if and only
if every edge of $P_\mathcal{S}$ is a translate of the vector
$e_i-e_j,$ for some $i,j\in[n].$
\end{theorem}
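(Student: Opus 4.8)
The plan is to establish the two implications separately, in each case translating the basis-exchange condition into the local geometry of the vertices of $P_\mathcal{S}$. The two standing facts I would use are that the vertices of $P_\mathcal{S}$ are exactly the points $e_S$ with $S\in\mathcal{S}$ (all lying in $\{x_1+\cdots+x_n=r\}$), and that passing from a vertex $e_S$ to an adjacent vertex $e_{S'}$ displaces the coordinates by $e_{S'}-e_S$, so that an edge direction records precisely which element is removed from $S$ and which is added.

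For the implication ``edges are translates of $e_i-e_j$ $\Rightarrow$ $\mathcal{S}$ is a matroid'' I would verify the exchange property directly. Fix $S_1,S_2\in\mathcal{S}$ and $i\in S_1\setminus S_2$. Since $e_{S_2}\in P_\mathcal{S}$, the displacement $v=e_{S_2}-e_{S_1}$ lies in the tangent cone of $P_\mathcal{S}$ at $e_{S_1}$, hence is a nonnegative combination $v=\sum_k\lambda_k(e_{a_k}-e_{b_k})$ of the edge directions at $e_{S_1}$; by hypothesis each of these has $a_k\notin S_1$, $b_k\in S_1$, and corresponds to a set $S_1\setminus\{b_k\}\cup\{a_k\}\in\mathcal{S}$. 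Comparing $i$-th coordinates ($v_i=-1$ because $i\in S_1\setminus S_2$) forces some edge with $b_k=i$ and $\lambda_k>0$; comparing the $a_k$-th coordinate of that edge then forces $v_{a_k}>0$, whence $a_k\in S_2\setminus S_1$. Setting $j=a_k$ yields $S_1\setminus\{i\}\cup\{j\}\in\mathcal{S}$ with $j\in S_2\setminus S_1$, which is the exchange property. The crux here is this coordinate-forcing observation, namely that the swapped-in element is automatically forced to lie in $S_2$.

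For the reverse implication I assume $\mathcal{S}=\mathcal{B}(M)$, take an edge $[e_{B_1},e_{B_2}]$, and show $|B_1\triangle B_2|=2$, so that its direction is some $e_j-e_i$. Suppose instead $|B_1\triangle B_2|\ge 4$. I would invoke the symmetric exchange property of matroids (a standard consequence of the axioms, see \cite{O}): for $i\in B_1\setminus B_2$ there is $j\in B_2\setminus B_1$ with both $B_1'=B_1\setminus\{i\}\cup\{j\}$ and $B_2'=B_2\setminus\{j\}\cup\{i\}$ bases. Then $e_{B_1'}+e_{B_2'}=e_{B_1}+e_{B_2}$, so the midpoint of $[e_{B_1},e_{B_2}]$ is also the midpoint of the chord $[e_{B_1'},e_{B_2'}]$. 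Since $|B_1\triangle B_2|\ge 4$ guarantees $B_1',B_2'\notin\{B_1,B_2\}$, this common midpoint lies in the relative interior of a genuinely two-dimensional convex hull, contradicting that it is an interior point of an edge.

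The main obstacle is arranging the two ``forcing'' inputs: in the first direction the coordinate bookkeeping that pins the exchange partner inside $S_2$, and in the second direction the availability of symmetric (rather than ordinary) exchange together with the verification that the four vertices $e_{B_1},e_{B_2},e_{B_1'},e_{B_2'}$ are genuinely distinct, so that the two chords really are different and the midpoint argument produces a contradiction.
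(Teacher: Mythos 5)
Your proof is correct. Note first that the paper itself gives no argument for this statement: it is quoted verbatim from the cited sources (Gel'fand--Serganova and Gel'fand--Goresky--MacPherson--Serganova), so the comparison is with the classical proofs rather than with anything in this text. Your first direction is essentially the classical GGMS argument: expand $e_{S_2}-e_{S_1}$ in the tangent cone at the vertex $e_{S_1}$, which is generated by the edge directions there, and do the coordinate bookkeeping; your observation that no generator $e_{a_k}-e_{b_k}$ can have $b_{k'}=a_k$ (since all $b_{k'}\in S_1$ while $a_k\notin S_1$) is exactly what makes $v_{a_k}>0$ and pins the exchange partner inside $S_2$, and it is airtight. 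Your second direction genuinely departs from the usual route: the standard proofs pick a linear functional maximized exactly on the edge and use the (ordinary) exchange axiom on the two endpoint bases to manufacture a third maximizing vertex, whereas you invoke the symmetric exchange property and the identity $e_{B_1'}+e_{B_2'}=e_{B_1}+e_{B_2}$ to place the edge's midpoint on a second chord. This buys a shorter, functional-free argument at the price of citing a stronger (though entirely standard, in Oxley) matroid fact. Three small points you should make explicit if you write this up: (i) every $e_S$ is a vertex of $P_\mathcal{S}$ because it is a $0/1$ point, i.e.\ a vertex of the cube containing $P_\mathcal{S}$; (ii) the fact that the tangent cone at a vertex is generated by the edge directions deserves a reference to standard polytope theory; (iii) in the midpoint step, the clean way to close the contradiction is the face property: a point in the relative interior of a face that is written as a convex combination of points of the polytope forces all those points into that face, so $e_{B_1'},e_{B_2'}$ would lie on the edge, while the only $0/1$ points on the line through $e_{B_1},e_{B_2}$ are the endpoints themselves (the coordinates indexed by $B_1\setminus B_2$ are constant along that line, and $|B_1\setminus B_2|\geq 2$ rules out $e_{B_1'}$). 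You flagged precisely the distinctness checks that this requires, and they all go through.
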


\begin{definition} For a matroid $M=([n],\mathcal{B})$ the elements
$i, j\in[n]$ are equivalent if there exist bases $B_1$ and $B_2$
with $i\in B_1$ and $B_2=(B_1\setminus\{i\})\cup\{j\}$. The
equivalence classes are called \emph{connected components} of $M$.
The matroid $M$ is \emph{connected} if it has only one connected
component.
\end{definition}
Denote by $c(M)$ the number of connected components of a matroid
$M$.

\begin{proposition}[\cite{FS}, Proposition 2.4]\label{dimmbp} The dimension of the matroid base polytope $P_M$
corresponding to a matroid $M$ on $[n]$ is determined by
 $$\dim(P_M)=n-c(M).$$
\end{proposition}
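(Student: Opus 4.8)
The plan is to compute $\dim(P_M)$ as the dimension of the linear space spanned by differences of vertices, and then to read off this dimension from the combinatorics of connected components. Fixing a base $B_0\in\mathcal{B}(M)$, the affine hull of $P_M$ is the coset $e_{B_0}+W$ of the linear space
$$W=\mathrm{span}\{e_B-e_{B'}\mid B,B'\in\mathcal{B}(M)\},$$
so that $\dim(P_M)=\dim(W)$. Thus it suffices to show $\dim(W)=n-c(M)$. Write $c=c(M)$ and let $C_1,\ldots,C_c\subseteq[n]$ be the connected components of $M$, which partition $[n]$. For each $t$ let $V_t^{0}=\{x\in\mathbb{R}^{C_t}\mid\sum_{i\in C_t}x_i=0\}$, regarded as the subspace of $\mathbb{R}^{n}$ supported on the coordinates in $C_t$; these subspaces lie on disjoint coordinate sets, so their sum is direct and $\dim\big(\bigoplus_{t}V_t^{0}\big)=\sum_t(|C_t|-1)=n-c$.

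For the inequality $\dim(W)\le n-c$ I would show $W\subseteq\bigoplus_t V_t^{0}$. The essential point is that the quantity $|B\cap C_t|$ is the same for every base $B$. This follows from the exchange axiom: any two bases can be joined by a sequence of single-element swaps that stay inside $\mathcal{B}(M)$, and each such swap exchanges an element $i$ for an element $j$ with $i\in B$ and $(B\setminus\{i\})\cup\{j\}\in\mathcal{B}(M)$, which by definition forces $i$ and $j$ to lie in the same component. Hence no swap alters $|B\cap C_t|$. Consequently, for any two bases the block $e_{B\cap C_t}-e_{B'\cap C_t}$ of $e_B-e_{B'}$ has vanishing coordinate sum, so $e_B-e_{B'}\in\bigoplus_t V_t^{0}$ and the bound follows.

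For the reverse inequality $\dim(W)\ge n-c$ I would exhibit enough vectors inside $W$. Whenever $i$ and $j$ lie in the same component there are, by definition, bases $B_1,B_2$ with $i\in B_1$ and $B_2=(B_1\setminus\{i\})\cup\{j\}$, and then $e_{B_1}-e_{B_2}=e_i-e_j\in W$. Let $G$ be the graph on the vertex set $[n]$ whose edges are exactly these swap pairs $\{i,j\}$; by the definition of connected components the connected components of $G$ coincide with $C_1,\ldots,C_c$. The differences $e_i-e_j$ over the edges of a graph with $c$ connected components on $n$ vertices span a space of dimension $n-c$ (the rank of the signed incidence matrix), so $\dim(W)\ge n-c$. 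Combining the two bounds yields $\dim(P_M)=\dim(W)=n-c(M)$.

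The main obstacle is the matroid-theoretic input rather than the linear algebra: one must justify that any two bases are connected by single-element exchanges that remain within a single component, and that the transitive closure of the single-swap relation has the connected components as its classes. Both are consequences of the basis-exchange axiom, and once they are in place the estimates $\dim(W)\le n-c$ and $\dim(W)\ge n-c$ are routine counts on the incidence geometry of the components.
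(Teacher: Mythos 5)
Your proof is correct, but note that the paper itself offers no proof of this proposition to compare against: it is imported verbatim from Feichtner--Sturmfels \cite{FS} (Proposition 2.4), so your argument has to stand on its own, and it does. The reduction $\dim(P_M)=\dim(W)$ is fine, since $e_B-e_{B'}=(e_B-e_{B_0})-(e_{B'}-e_{B_0})$ shows $W$ is exactly the direction space of the affine hull of the vertex set $\{e_B\}$. The upper bound $W\subseteq\bigoplus_t V_t^{0}$ correctly rests on the base-independence of $|B\cap C_t|$, and the lower bound correctly invokes the rank $n-c$ of the signed incidence matrix of the swap graph $G$. The two matroid-theoretic inputs you flag are indeed the only nontrivial points, and both are sound: (i) any two bases are linked by single-element exchanges, which is a one-line induction on $|B\triangle B'|$ from the exchange axiom (pick $i\in B\setminus B'$, exchange it for some $j\in B'\setminus B$, and the symmetric difference drops by two) --- you assert this rather than prove it, so spell out the induction; and (ii) the components of $G$ are exactly $C_1,\ldots,C_c$. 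On (ii) your formulation has a quiet advantage worth noting: the paper's definition of equivalence is the single-swap relation, whose transitivity is a true but nonobvious matroid fact, and by routing everything through the graph $G$ and its connected components (i.e., through the transitive closure) your proof never needs it --- the upper bound only uses that each swap pair lies in a single class, and the lower bound only uses connectivity of $G$ restricted to each class. The result is essentially the classical argument in the spirit of \cite{FS} and \cite{GGMS}, complete modulo the routine induction in (i).
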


\subsection{Flags of matroid base polytope}

For a vector
$\omega=(\omega_1,\omega_2,\ldots,\omega_n)\in\mathbb{Z}^n_+$ we
regard the function $\omega^{\ast}$ on the matroid base polytope
$P_M$ as a weight function on a matroid $M$, so that the
$\omega$-weight of a basis
$B=\{i_1,\ldots,i_{r(M)}\}\in\mathcal{B}(M)$ is given by
$$\omega^{\ast}(B)=\langle\omega,e_B\rangle=\omega_{i_1}+\cdots+\omega_{i_{r(M)}}.$$
Let $\mathcal{B}_{\omega}\subset\mathcal{B}(M)$ be the collection
of bases of $M$ having maximum $\omega$-weight. The collection
$\mathcal{B}_\omega$ consists of all bases obtained by greedy
algorithm. The polytope $P_{\mathcal{B}_\omega}$ is the face of
$P_M$ on which $\omega^{\ast}$ is maximized. Theorem
\ref{characterization} implies that $\mathcal{B}_\omega$ is the
collection of bases of a matroid which we denote by $M_\omega$.

Recall that $\omega$ determines a unique flag of subsets

\begin{equation}\label{flag}
\mathcal{F}_\omega:\emptyset=:F_0\subset F_1\subset\ldots\subset
F_{k}\subset F_{k+1}:=[n]
\end{equation}
for which $\omega$ is constant on $F_i\setminus F_{i-1},
i=1,\ldots,k+1$ and satisfies $\omega|_{F_{i+1}\setminus
F_i}<\omega|_{F_i\setminus F_{i-1}}, i=1,\ldots k$. The following
proposition shows that $M_\omega$ depends only on the flag
$\mathcal{F}_\omega$. By $r(A)=r(M\mid_A)$, where $M\mid_A$ is the
restriction of $M$ to $A\subset[n]$, is defined the rank function
of $M$.

\begin{proposition}[\cite{AK}, Proposition 1] Let
$\omega\in\mathbb{Z}^{n}_+$ and $\mathcal{F}=\mathcal{F}_\omega$
is the corresponding flag given by $(\ref{flag})$. A base
$B\in\mathcal{B}(M)$ of a matroid $M$ has the maximum
$\omega$-weight, i.e. $B\in B_\omega$ if and only if
$|B\cap(F_i\setminus F_{i-1})|=r(F_i)-r(F_{i-1}), i=1,\ldots,k+1$.
\end{proposition}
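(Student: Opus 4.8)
The plan is to characterize the maximum-weight bases directly through the greedy algorithm and the flag structure. The key observation is that the flag $\mathcal{F}_\omega$ encodes $\omega$ as a decreasing step function on the blocks $F_i \setminus F_{i-1}$, so maximizing $\omega^{\ast}(B) = \sum_{i \in B} \omega_i$ over bases amounts to preferring elements in the earlier (higher-weight) blocks. First I would recall that by the greedy algorithm for matroids, a basis of maximum $\omega$-weight is obtained by processing elements in order of decreasing weight and adding each element whenever it keeps the selected set independent. Since $\omega$ is constant on each block $F_i \setminus F_{i-1}$ and strictly decreasing across blocks, the greedy process is exactly: first select a maximal independent subset within $F_1$, then extend greedily using $F_2 \setminus F_1$, and so on.

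The heart of the argument is to translate this greedy description into the stated rank conditions. I would argue that running the greedy algorithm through the first $i$ blocks produces a maximal independent set inside $F_i$, hence a set of size $r(F_i)$; the number of elements selected from block $F_i \setminus F_{i-1}$ is therefore $r(F_i) - r(F_{i-1})$. This establishes that every maximum-weight basis $B$ satisfies $|B \cap (F_i \setminus F_{i-1})| = r(F_i) - r(F_{i-1})$ for all $i = 1, \ldots, k+1$. For the converse, I would show that any basis $B$ meeting all these intersection cardinality conditions achieves the same (maximum) $\omega$-weight: writing $\omega^{\ast}(B)$ as a telescoping sum $\sum_i \omega|_{F_i \setminus F_{i-1}} \cdot |B \cap (F_i \setminus F_{i-1})|$ and using Abel summation, the weight depends only on the partial sums $|B \cap F_i| = \sum_{j \le i} |B \cap (F_j \setminus F_{j-1})|$, which are forced to equal $r(F_i)$ by the hypothesis. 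Thus all such bases have identical weight, and since the greedy bases are among them, that common value is the maximum.

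The step I expect to be the main obstacle is making the greedy-to-rank translation rigorous in both directions simultaneously, particularly the claim that the cardinality conditions are not merely necessary but also sufficient for being a basis of maximum weight. The subtlety is that an arbitrary subset $B$ satisfying the intersection conditions must first be verified to be a basis at all (i.e.\ to have size $r(M) = r([n]) = r(F_{k+1})$, which follows by summing the conditions telescopically), and then shown to have the maximal weight rather than merely a weight matching the greedy optimum on the nose. I would address this by the submodularity/monotonicity of the matroid rank function: for any basis $B$, the general inequality $|B \cap F_i| \le r(F_i)$ holds, and strict inequality in even one block forces a strictly smaller weight after Abel summation against the strictly decreasing weight profile. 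This monotonicity argument is what upgrades the necessary conditions to a full characterization, and it is where I would concentrate the careful bookkeeping, since the strict decrease $\omega|_{F_{i+1}\setminus F_i} < \omega|_{F_i \setminus F_{i-1}}$ is precisely what makes the partial-sum comparison detect non-optimality.
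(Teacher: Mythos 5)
Your argument is correct, but note first that the paper itself offers no proof of this proposition: it is quoted directly from Ardila and Klivans \cite{AK}, so there is no in-paper proof to compare against, and the standard argument in \cite{AK} is essentially the greedy/rank-counting one you give. One caution about your write-up: the first part, where you conclude that \emph{every} maximum-weight basis satisfies the intersection conditions because the greedy algorithm produces bases with these counts, is by itself incomplete --- greedy outputs are maximum-weight, but you have not shown (and it is precisely the content of the proposition) that every maximum-weight basis arises from some run of greedy with suitable tie-breaking; with ties inside blocks, a maximum-weight basis whose trace on $F_1$ were not maximal independent could not be recovered this way without circular reasoning. Your third paragraph repairs this correctly and is the real proof: writing $w_i$ for the common weight on $F_i\setminus F_{i-1}$ and $b_i=|B\cap F_i|$, Abel summation gives $\omega^{\ast}(B)=w_{k+1}\,r(M)+\sum_{i=1}^{k}(w_i-w_{i+1})\,b_i$ with $w_i-w_{i+1}>0$; since $b_i\le r(F_i)$ for every basis (as $B\cap F_i$ is an independent subset of $F_i$), while a basis attaining $b_i=r(F_i)$ for all $i$ simultaneously exists by repeated extension of independent sets along the flag, a basis is maximum-weight if and only if $b_i=r(F_i)$ for all $i$, which telescopes to the stated block conditions (the case $i=k+1$ following from $|B|=r(M)$). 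In fact this monotonicity argument makes the appeal to the greedy optimality theorem unnecessary: greedy is needed only to construct one basis attaining all the rank bounds at once, so your proof is self-contained once the first paragraph is read as motivation rather than as the necessity direction.
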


We say that a flag $\mathcal{F}$ is maximized at a base
$B\in\mathcal{B}(M)$ if and only if $B\in B_\omega$ and
$\mathcal{F}=\mathcal{F}_\omega$ for some
$\omega\in\mathbb{Z}^{n}_+$. Denote by $M/\mathcal{F}$ the matroid
$M_\omega$ corresponding to a flag
$\mathcal{F}=\mathcal{F}_\omega$. We determine the map
$\pi_{P_M}:L(Pe^{n-1})\rightarrow L(P_M)$ associated to the
matroid base polytope by definition \ref{lattice}, namely
$$\pi_{P_M}(\mathcal{F})=P_{M/\mathcal{F}}.$$ The following
proposition describes the matroid $M/\mathcal{F}$ as a sum of its
minors.

\begin{proposition}[\cite{AK}, Proposition 2]\label{decomposition}
If $\mathcal{F}=\{\emptyset=:F_0\subset\ldots\subset
F_{k+1}:=[n]\}$, then
$$M/\mathcal{F}=\bigoplus_{i=1}^{k+1}\left(M|_{F_{i}}\right)/F_{i-1}.$$
\end{proposition}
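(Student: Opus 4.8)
Proposition (Ardila–Klivans decomposition). We want to show that for a flag $\mathcal{F}=\{\emptyset=F_0\subset\cdots\subset F_{k+1}=[n]\}$ the maximized matroid decomposes as a direct sum of minors
$$M/\mathcal{F}=\bigoplus_{i=1}^{k+1}(M|_{F_i})/F_{i-1}.$$

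Let me think about how to prove this from the tools already established.

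The key structural fact available is the preceding Proposition (Ardila–Klivans, Prop. 1): a basis $B$ lies in $\mathcal{B}_\omega$ (equivalently $\mathcal{B}(M/\mathcal{F})$) if and only if the intersection sizes satisfy $|B\cap(F_i\setminus F_{i-1})|=r(F_i)-r(F_{i-1})$ for all $i$. So I have an exact characterization of the bases of $M/\mathcal{F}$, and the goal is purely a matroid-combinatorics statement: identify this collection of bases with the bases of the direct sum on the right.

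So the whole thing reduces to a bijection of base collections.

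Let me plan the bijection. On the right side, a basis of $\bigoplus_{i}(M|_{F_i})/F_{i-1}$ is a disjoint union $B=B_1\sqcup\cdots\sqcup B_{k+1}$ where each $B_i$ is a basis of the minor $(M|_{F_i})/F_{i-1}$. These minors live on disjoint ground sets $F_i\setminus F_{i-1}$, so the direct sum structure is clean. The bases of $(M|_{F_i})/F_{i-1}$ are subsets $B_i\subseteq F_i\setminus F_{i-1}$ of size $r(F_i)-r(F_{i-1})$ that are independent in $M|_{F_i}$ and stay maximal after contracting $F_{i-1}$.

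I need to match these up with the $B$'s from the characterization, and the rank identity $r(F_i)-r(F_{i-1})$ appearing in both places is exactly the bridge.

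**Proof plan.** The plan is to prove the equality of polytopes by showing the two matroids have the same collection of bases, invoking the characterization from the preceding Ardila–Klivans proposition. First I would fix a weight $\omega\in\mathbb{Z}_+^n$ with $\mathcal{F}=\mathcal{F}_\omega$, so that $M/\mathcal{F}=M_\omega$ has base collection $\mathcal{B}_\omega$. By the cited Proposition, $B\in\mathcal{B}_\omega$ exactly when $|B\cap(F_i\setminus F_{i-1})|=r(F_i)-r(F_{i-1})$ for every $i=1,\dots,k+1$. The core of the argument is to verify that this condition is equivalent to $B$ being a disjoint union $B=\bigsqcup_{i=1}^{k+1}B_i$ with each $B_i$ a basis of the minor $(M|_{F_i})/F_{i-1}$; this is precisely the defining condition for the bases of the direct sum $\bigoplus_{i=1}^{k+1}(M|_{F_i})/F_{i-1}$ since these minors sit on the disjoint ground sets $F_i\setminus F_{i-1}$.

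Next I would establish the equivalence in two directions. For the forward direction, suppose $B\in\mathcal{B}_\omega$ and set $B_i=B\cap(F_i\setminus F_{i-1})$; I must check that $B_i$ is a basis of $(M|_{F_i})/F_{i-1}$. The rank of the contraction $(M|_{F_i})/F_{i-1}$ equals $r(F_i)-r(F_{i-1})$, which matches $|B_i|$, so it suffices to show $B_i$ is independent in the contracted matroid. This follows from the greedy/matroid-union structure: because the maximum-weight bases respect the flag filtration, the partial sums $\bigcup_{j\le i}B_j$ must achieve rank $r(F_i)$ in $M|_{F_i}$, forcing each $B_i$ to be a basis of the successive quotient. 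The reverse direction is the same computation run backward: if each $B_i$ is a basis of $(M|_{F_i})/F_{i-1}$, then the cumulative rank identities $r(\bigcup_{j\le i}B_j)=r(F_i)$ hold, and summing the local conditions recovers the intersection-size condition characterizing $\mathcal{B}_\omega$.

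The main obstacle, and the step I would treat most carefully, is the claim that the intersection-size conditions $|B\cap(F_i\setminus F_{i-1})|=r(F_i)-r(F_{i-1})$ imply the \emph{independence} of $B_i$ in the quotient $(M|_{F_i})/F_{i-1}$ — a priori the size condition only constrains cardinalities, not independence. The resolution is to use submodularity of the rank function together with the telescoping identity $\sum_{i=1}^{k+1}(r(F_i)-r(F_{i-1}))=r([n])=r(M)$: since $B$ has exactly $r(M)$ elements distributed so that each block contributes its full local rank, and since $r(B\cap F_i)\le r(F_i)$ with the increments pinned by the size conditions, every inequality in the chain must be an equality, which forces independence of each block in its quotient. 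I would spell out this tightness argument, as it is the only genuinely non-formal point; the remaining identifications are bookkeeping about ground sets of minors and are routine given Proposition \ref{dimmbp} for the dimension count and the direct-sum description of bases.
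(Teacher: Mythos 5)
Your proof is correct in substance, but note that there is no proof in the paper to compare it with: the paper states this proposition as an imported result, citing Ardila--Klivans \cite{AK}, Proposition 2, so your write-up is a self-contained reconstruction rather than a variant of the authors' argument. Your overall strategy --- reduce to an equality of base collections via the preceding characterization $|B\cap(F_i\setminus F_{i-1})|=r(F_i)-r(F_{i-1})$, then check both inclusions --- is exactly the natural route, and it works. Two refinements are worth making. First, the step you flag as the main obstacle (independence of each block $B_i=B\cap(F_i\setminus F_{i-1})$ in the quotient) needs neither submodularity nor a tightness argument: the characterization from \cite{AK} quantifies over $B\in\mathcal{B}(M)$, so $B$ is independent in $M$, hence so is every $B\cap F_i$; telescoping the size conditions gives $|B\cap F_i|=r(F_i)$, so $B\cap F_i$ is outright a basis of $M|_{F_i}$, and then monotonicity yields $r_{(M|_{F_i})/F_{i-1}}(B_i)=r(B_i\cup F_{i-1})-r(F_{i-1})\ge r(B\cap F_i)-r(F_{i-1})=|B_i|$, which forces $B_i$ to be a basis of the minor with no further work. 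Second, in the reverse direction you should say explicitly that the characterization only applies once $B=\bigsqcup_i B_i$ is known to be a basis of $M$; your cumulative identities $r(\bigcup_{j\le i}B_j)=r(F_i)$ do deliver this by induction (a subset of $F_{i-1}$ of rank $r(F_{i-1})$ can be replaced by $F_{i-1}$ in any rank computation, so $r(\bigcup_{j\le i}B_j)=r(F_{i-1}\cup B_i)=r(F_i)$, and the matching cardinality count makes $B$ independent and spanning), but as written this verification is compressed into a single clause. With those two points spelled out, your proof is complete.
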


Since for the direct sum of matroids we have $P_{M_1\oplus
M_2}=P_{M_1}\times P_{M_2}$, by Proposition \ref{dimmbp} we obtain

\begin{corollary}\label{M-rank}
The $P_M$-rank of a flag $\mathcal{F}$ is given by
$$\mathrm{rk}_{P_M}(\mathcal{F})=\dim(P_{M/\mathcal{F}})=n-\displaystyle\sum_{i=1}^{k}c\left((M|F_i)/F_{i-1}\right).$$
\end{corollary}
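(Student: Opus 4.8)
The plan is to reduce the statement entirely to facts already in hand, so that what remains is pure bookkeeping. By Definition \ref{Q-rank} the quantity $\mathrm{rk}_{P_M}(\mathcal{F})$ is the dimension of the face $\pi_{P_M}(\mathcal{F})$, and since this face has just been identified with the base polytope $P_{M/\mathcal{F}}$, everything hinges on evaluating $\dim(P_{M/\mathcal{F}})$. So the first step is simply to record the chain of equalities $\mathrm{rk}_{P_M}(\mathcal{F})=\dim(\pi_{P_M}(\mathcal{F}))=\dim(P_{M/\mathcal{F}})$, after which the $Q$-rank has been traded for a purely combinatorial dimension count.

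Next I would invoke the direct-sum decomposition of the matroid $M/\mathcal{F}$ supplied by Proposition \ref{decomposition}, namely $M/\mathcal{F}=\bigoplus_{i=1}^{k+1}(M|_{F_i})/F_{i-1}$. Since the base polytope of a direct sum of matroids is the Cartesian product of the corresponding base polytopes, we get $P_{M/\mathcal{F}}=\prod_{i=1}^{k+1}P_{(M|_{F_i})/F_{i-1}}$, and dimension is additive over products. Hence $\dim(P_{M/\mathcal{F}})=\sum_{i=1}^{k+1}\dim\big(P_{(M|_{F_i})/F_{i-1}}\big)$, which reduces the problem to computing the dimension of each individual summand.

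Finally I would apply Proposition \ref{dimmbp} factor by factor. The minor $(M|_{F_i})/F_{i-1}$ has ground set $F_i\setminus F_{i-1}$ of cardinality $|F_i|-|F_{i-1}|$, so its base polytope has dimension $(|F_i|-|F_{i-1}|)-c\big((M|_{F_i})/F_{i-1}\big)$. Summing over all minors of the decomposition, the cardinalities telescope to $|F_{k+1}|-|F_0|=n$, leaving $n$ minus the total of the component counts, which is exactly the asserted formula. I do not expect any genuine geometric or combinatorial obstacle here, since all the substantive content has already been deposited in Propositions \ref{decomposition} and \ref{dimmbp} together with the product formula $P_{M_1\oplus M_2}=P_{M_1}\times P_{M_2}$; the one point demanding care — and essentially the sole place an error could enter — is the index bookkeeping, namely running the sum over precisely the minors produced by Proposition \ref{decomposition} and correctly identifying the ground set $F_i\setminus F_{i-1}$ of each, so that the telescoping yields exactly $n$.
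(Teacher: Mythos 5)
Your proof is correct and follows exactly the route the paper takes: the corollary is derived there inline from Proposition \ref{decomposition}, the product formula $P_{M_1\oplus M_2}=P_{M_1}\times P_{M_2}$, and Proposition \ref{dimmbp}, which is precisely your three-step argument written out with the bookkeeping made explicit. The one point worth flagging is that your (correct) telescoping runs over all $k+1$ minors $(M|_{F_i})/F_{i-1}$, $i=1,\ldots,k+1$, so the upper limit $k$ in the paper's displayed formula is evidently an off-by-one typo, and your version is the right one.
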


\subsection{Quasisymmetric enumerator $F_q(P_M)$ for matroids}

The Hopf algebra of matroids $\mathcal{M}at$ was introduced by
Schmitt \cite{Sch} and more intensively studied by Crapo and
Schmitt \cite{CS1}, \cite{CS2}.

As a vector space $\mathcal{M}at$ is linearly spanned over the
field $\mathbf{k}$ by isomorphism classes of finite matroids
$[M]$. The direct sum induces a product, while a coproduct is
determined by restrictions and contractions
\[[M_1]\cdot[M_2]=[M_1\oplus
M_2]\;\;\;\;\;\mbox{and}\;\;\;\;\;\Delta[M]=\displaystyle\sum_{A\subseteq
E}[M|_A]\otimes[M/A].
\]
The Hopf algebra $\mathcal{M}at$ is graded
$\mathcal{M}at=\bigoplus_{n\geq0}\mathcal{M}at_n,$ where
$\mathcal{M}at_n$ denotes the subspace spanned by elements $[M]$
for which the ground set has cardinality $n$. The Hopf algebra
$\mathcal{M}at$ is graded, connected, commutative, but
non-cocommutative. The unit is the class of a unique matroid on
the empty set $[M_{\emptyset}]$.

The character $c:\mathcal{M}at\rightarrow\bf{k}$ defined by
$$c([M])=\left\{\begin{array}{cc} 1,& M \ \mbox{is a direct sum of loops and isthmuses},\\
0,& \mbox{otherwise}\end{array}\right.$$ turns $\mathcal{M}at$
into a combinatorial Hopf algebra considered by Billera, Jia and
Reiner in \cite{BJR}. By Theorem \ref{fundamental} there is a
unique morphism $\Psi:(\mathcal{M}at,c)\rightarrow
(QSym,\zeta_{\mathcal{Q}})$ of combinatorial Hopf algebras given
in monomial bases of quasisymmetric functions by
$$\Psi([M])=\displaystyle\sum_{\alpha\models n}c_\alpha([M])M_{\alpha}.$$

\begin{proposition}[\cite{BJR}, Proposition 3.3] The coefficient $c_\alpha([M])$ is the
number of flags $\mathcal{F}:\emptyset=F_0\subset\cdots\subset
F_{k+1}=[n]$ having $\mathrm{type}(\mathcal{F})=\alpha$ and for
which each subquotient $(M|_{F_i})/F_{i-1}$ is a direct sum of
loops and isthmuses.
\end{proposition}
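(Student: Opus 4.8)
The plan is to apply the fundamental Theorem~\ref{fundamental} directly to the combinatorial Hopf algebra $(\mathcal{M}at,c)$ and then unwind the resulting expression combinatorially. Writing $\alpha=(a_1,\ldots,a_\ell)\models n$, that theorem gives
$$c_\alpha([M])=c^{\otimes\ell}\circ(p_{a_1}\otimes\cdots\otimes p_{a_\ell})\circ\Delta^{(\ell-1)}([M]),$$
so the whole argument reduces to three tasks: (i) computing the iterated coproduct $\Delta^{(\ell-1)}([M])$; (ii) determining which of its terms survive the projections $p_{a_i}$; and (iii) evaluating the character $c$ on each surviving tensor factor.

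The crux is step (i). I would prove by induction on $\ell$, using coassociativity in the form $\Delta^{(\ell-1)}=(\mathrm{id}^{\otimes(\ell-2)}\otimes\Delta)\circ\Delta^{(\ell-2)}$, that
$$\Delta^{(\ell-1)}([M])=\sum_{\emptyset=F_0\subseteq F_1\subseteq\cdots\subseteq F_\ell=[n]}[(M|_{F_1})/F_0]\otimes[(M|_{F_2})/F_1]\otimes\cdots\otimes[(M|_{F_\ell})/F_{\ell-1}],$$
the sum ranging over all (weak) chains of subsets of $[n]$. The base case $\ell=2$ is the definition of $\Delta$. For the inductive step I apply $\Delta$ to the last tensor factor, which by the induction hypothesis is $[M/F_{\ell-2}]$, and rewrite each summand $[(M/F_{\ell-2})|_C]\otimes[(M/F_{\ell-2})/C]$ by setting $F_{\ell-1}:=F_{\ell-2}\cup C$ as $C$ ranges over subsets of $[n]\setminus F_{\ell-2}$. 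The main technical obstacle, and the only place where matroid theory rather than formal Hopf-algebraic manipulation enters, is the identity $(M/F_{\ell-2})|_{F_{\ell-1}\setminus F_{\ell-2}}=(M|_{F_{\ell-1}})/F_{\ell-2}$ expressing that restriction and contraction commute for nested sets; this is precisely what guarantees that iterating the coproduct produces exactly the \emph{gap subquotients} $(M|_{F_i})/F_{i-1}$ and keeps the expression self-consistent under further iteration. (The contraction factor $(M/F_{\ell-2})/C=M/F_{\ell-1}=(M|_{F_\ell})/F_{\ell-1}$ with $F_\ell=[n]$ handles the last step.)

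With the iterated coproduct in hand the remaining steps are routine. The projection $p_{a_i}$ retains a tensor factor only when its ground set $F_i\setminus F_{i-1}$ has cardinality exactly $a_i$, so after applying $p_{a_1}\otimes\cdots\otimes p_{a_\ell}$ the surviving chains are precisely those with $\mathrm{type}(\mathcal{F})=\alpha$; since every part $a_i$ is positive the inclusions become strict, and $\mathcal{F}:\emptyset=F_0\subset\cdots\subset F_\ell=[n]$ is a genuine flag of the form in the statement. Finally $c^{\otimes\ell}$ evaluates to $\prod_{i=1}^{\ell}c([(M|_{F_i})/F_{i-1}])$, and because $c$ is the indicator of being a direct sum of loops and isthmuses, this product equals $1$ exactly when every subquotient $(M|_{F_i})/F_{i-1}$ is such a direct sum, and $0$ otherwise. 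Summing over all flags of type $\alpha$ therefore counts precisely the flags in which each subquotient is a direct sum of loops and isthmuses, which is the asserted formula (with $\ell=k+1$ matching the indexing of the statement).
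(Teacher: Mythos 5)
Your proof is correct: the induction on the iterated coproduct, the deletion--contraction commutation $(M/A)|_B=(M|_{A\cup B})/A$ together with $(M/A)/C=M/(A\cup C)$, and the evaluation of $c^{\otimes\ell}$ are all sound, and the degree projections $p_{a_i}$ do force the inclusions to be strict exactly as you claim. The paper states this proposition without proof, citing \cite{BJR}, and your argument is essentially the standard derivation from Theorem~\ref{fundamental} used there --- indeed it is the same unwinding of $\Delta^{(\ell-1)}$ that this paper performs explicitly in its proof of Theorem~\ref{mat} for the character $c_q$ --- so there is nothing to flag.
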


We extend the basic field $\bf k$ into the field of rational
functions ${\bf k}(q)$ and define the character
$c_q:\mathcal{M}at\rightarrow{\bf k}(q)$ with
$$c_q([M])=q^{\mathrm{rk}(M)},$$ where $\mathrm{rk}(M)=n-c(M)$.
Let
$\Psi_q:(\mathcal{M}at,c_q)\rightarrow(\mathcal{Q}Sym,\zeta_{\mathcal{Q}})$
be a unique morphism of combinatorial Hopf algebras over ${\bf
k}(q)$.

\begin{theorem}\label{mat}

For a matroid $M$ the quasisymmetric enumerator function
$F_q(P_M)$ associated to the matroid base polytope $P_M$ coincides
with the value of a universal morphism  $$F_q(P_B)=\Psi_q([M]).$$

\end{theorem}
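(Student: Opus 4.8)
The plan is to run the proof in complete parallel with that of Theorem \ref{coincides} for nestohedra, simply replacing the building-set Hopf algebra $\mathcal{B}$ by the matroid Hopf algebra $\mathcal{M}at$ and its character $\zeta_q$ by $c_q$. Both $\Psi_q([M])$ and $F_q(P_M)$ are quasisymmetric functions homogeneous of degree $n$, so it suffices to compare their coefficients in the monomial basis. Writing $\Psi_q([M])=\sum_{\alpha\models n}(c_q)_\alpha([M])\,M_\alpha$, the goal reduces to showing that for every composition $\alpha\models n$ one has $(c_q)_\alpha([M])=\sum_{\mathcal{F}:\,\mathrm{type}(\mathcal{F})=\alpha}q^{\mathrm{rk}_{P_M}(\mathcal{F})}$, after which the claim follows at once by comparison with the expansion \eqref{expansion} of $F_q(P_M)$.

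First I would compute $(c_q)_\alpha([M])$ directly from Theorem \ref{fundamental}. For $\alpha=(a_1,\ldots,a_k)$ the coefficient is $c_q^{\otimes k}\circ(p_{a_1}\otimes\cdots\otimes p_{a_k})\circ\Delta^{(k-1)}([M])$. The key structural fact is that the iterated coproduct of $\mathcal{M}at$ is indexed by flags: iterating $\Delta[M]=\sum_{A\subseteq[n]}[M|_A]\otimes[M/A]$ along a chain $\emptyset=F_0\subset F_1\subset\cdots\subset F_k=[n]$ produces the tensor factors $[(M|_{F_i})/F_{i-1}]$, whose ground set $F_i\setminus F_{i-1}$ has cardinality $|F_i|-|F_{i-1}|$. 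The projections $p_{a_i}$ onto homogeneous components then pick out exactly those flags with $\mathrm{type}(\mathcal{F})=\alpha$, and applying $c_q$ factorwise gives $(c_q)_\alpha([M])=\sum_{\mathcal{F}:\,\mathrm{type}(\mathcal{F})=\alpha}\prod_{i}q^{\mathrm{rk}((M|_{F_i})/F_{i-1})}$.

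The heart of the argument is identifying the exponent $\sum_i\mathrm{rk}((M|_{F_i})/F_{i-1})$ with $\mathrm{rk}_{P_M}(\mathcal{F})$, which is precisely Corollary \ref{M-rank}. Since $\mathrm{rk}((M|_{F_i})/F_{i-1})=(|F_i|-|F_{i-1}|)-c((M|_{F_i})/F_{i-1})$, summing along the flag telescopes the cardinalities to $n$, and the direct-sum decomposition $M/\mathcal{F}=\bigoplus_i(M|_{F_i})/F_{i-1}$ of Proposition \ref{decomposition} together with the dimension formula $\dim(P_{M/\mathcal{F}})=n-c(M/\mathcal{F})$ of Proposition \ref{dimmbp} yields $\sum_i\mathrm{rk}((M|_{F_i})/F_{i-1})=n-c(M/\mathcal{F})=\dim(P_{M/\mathcal{F}})=\mathrm{rk}_{P_M}(\mathcal{F})$. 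Substituting this back gives $(c_q)_\alpha([M])=\sum_{\mathcal{F}:\,\mathrm{type}(\mathcal{F})=\alpha}q^{\mathrm{rk}_{P_M}(\mathcal{F})}$, hence $\Psi_q([M])=\sum_{\mathcal{F}}q^{\mathrm{rk}_{P_M}(\mathcal{F})}M_{\mathrm{type}(\mathcal{F})}$, which is exactly \eqref{expansion} for $F_q(P_M)$.

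I expect the only genuinely delicate point to be the first step, namely verifying that the iterated coproduct of $\mathcal{M}at$ is governed by flags with tensor factors $[(M|_{F_i})/F_{i-1}]$. This rests on the compatibility of restriction and contraction, i.e. that $(M|_{F_i})/F_{i-1}=(M/F_{i-1})|_{F_i\setminus F_{i-1}}$ and that these minor operations compose consistently along the flag, so that nesting $\Delta$ reproduces precisely the subquotient decomposition. Once this is in place, everything else is bookkeeping that mirrors the nestohedron case of Theorem \ref{coincides} verbatim.
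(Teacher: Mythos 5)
Your proposal is correct and follows essentially the same route as the paper's proof: expand $\Psi_q([M])$ in the monomial basis via Theorem \ref{fundamental}, where the iterated coproduct of $\mathcal{M}at$ yields the subquotients $(M|_{F_i})/F_{i-1}$ along flags, then identify $\sum_i\mathrm{rk}\left((M|_{F_i})/F_{i-1}\right)$ with $\mathrm{rk}_{P_M}(\mathcal{F})$ by Corollary \ref{M-rank} and match against the expansion $(\ref{expansion})$. The only difference is cosmetic: you re-derive Corollary \ref{M-rank} from Propositions \ref{decomposition} and \ref{dimmbp} inside the argument, whereas the paper simply cites it.
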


\begin{proof}
The morphism $\Psi_q$ is given by
$$\Psi_q([M])=\displaystyle\sum_{\alpha\models n}c_{q,\alpha}([M])M_\alpha,$$
where
$$c_{q,\alpha}([M])=\displaystyle\sum_{\mathcal{F}:\mathrm{type}(\mathcal{F})=\alpha}
\displaystyle\prod_{i=1}^{k}q^{\mathrm{rk}\left((M|_{F_i})/F_{i-1}\right)}.$$
By Corollary \ref{M-rank} we have
$$c_{q,\alpha}([M])=\displaystyle\sum_{\mathcal{F}:\mathrm{type}(\mathcal{F})=\alpha}q^{\mathrm{rk}_{P_M}(\mathcal{F})}.$$
Consequently,
$$\Psi_q([M])=\displaystyle\sum_{\mathcal{F}}
q^{\mathrm{rk}_{P_M}(\mathcal{F})}M_{\mathrm{type}(\mathcal{F})},$$
where the sum is over all flags of $[n]$. This is exactly the form
of $F_q(P_M)$ given by identity $(\ref{expansion})$.
\end{proof}

As a consequence of Theorem \ref{mat} and Theorem \ref{general} we
obtain an algebraic description of the $f$-polynomials of matroid
base polytopes.

\begin{theorem}\label{algebraic} The $f$-polynomial of a matroid base polytope $P_M$ is given by
$$f(P_M,q)=(-1)^n\mathbf{ps}(\Psi_{-q}([M]))(-1).$$
\end{theorem}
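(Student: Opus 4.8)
The plan is to obtain Theorem~\ref{algebraic} as an immediate consequence of Theorem~\ref{mat} and Theorem~\ref{general}, the only manipulation being the substitution of $-q$ for $q$. Recall that by Theorem~\ref{characterization} the matroid base polytope $P_M$ is a generalized permutohedron, so the entire apparatus of Section~4 applies to $Q=P_M$: the map $\pi_{P_M}$, the rank $\mathrm{rk}_{P_M}$, and hence the enumerator $F_q(P_M)$ are all well defined, and Theorem~\ref{general} specializes to $P_M$.

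First I would apply Theorem~\ref{general} to the generalized permutohedron $Q=P_M$ on the ground set $[n]$. This gives at once $f(P_M,q)=(-1)^{n}\mathbf{ps}(F_{-q}(P_M))(-1)$, so it remains only to rewrite the right-hand side in terms of the universal morphism $\Psi_q$.

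The key step is to invoke Theorem~\ref{mat}, which asserts $F_q(P_M)=\Psi_q([M])$. Both sides are elements of $QSym$ with coefficients in $\mathbf{k}(q)$, and their dependence on $q$ enters only through the exponents $q^{\mathrm{rk}_{P_M}(\mathcal{F})}$ on the left and through the character value $c_q([M])=q^{\mathrm{rk}(M)}$ on the right. Since this is an identity for the formal parameter $q$, I may replace $q$ by $-q$ on both sides to conclude $F_{-q}(P_M)=\Psi_{-q}([M])$. Substituting this into the formula of the previous paragraph yields $f(P_M,q)=(-1)^{n}\mathbf{ps}(\Psi_{-q}([M]))(-1)$, which is exactly the assertion.

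I do not anticipate a genuine obstacle, since the argument is a formal concatenation of two already-proved results together with a benign change of variable. The only point deserving a remark is that Theorem~\ref{general} is phrased for $(n-1)$-dimensional generalized permutohedra, whereas for a disconnected matroid $P_M$ has dimension $n-c(M)<n-1$ by Proposition~\ref{dimmbp}. This causes no difficulty: the single dimension-sensitive ingredient in the proof of Theorem~\ref{general} is Proposition~\ref{euler}, whose proof only uses that $\sigma_G^{\circ}$ is the union of the relative interiors $\sigma_{\mathcal{F}}^{\circ}$ of the braid cones with $\pi_{P_M}(\mathcal{F})=G$ and is homeomorphic to an open cell. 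This remains valid for $P_M$ regarded inside $\mathbb{R}^{n}$, because its normal fan still coarsens the braid arrangement fan, so the specialization to $P_M$ is legitimate and the proof goes through verbatim.
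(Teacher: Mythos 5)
Your proposal is correct and matches the paper exactly: the paper states Theorem~\ref{algebraic} as an immediate consequence of Theorem~\ref{mat} and Theorem~\ref{general}, which is precisely your concatenation of $f(P_M,q)=(-1)^{n}\mathbf{ps}(F_{-q}(P_M))(-1)$ with $F_{-q}(P_M)=\Psi_{-q}([M])$ via the substitution $q\mapsto -q$. Your closing remark on the dimension of $P_M$ for disconnected $M$ addresses a point the paper passes over silently, and your justification (Proposition~\ref{euler} only needs the decomposition of $\sigma_G^{\circ}$ into relative interiors of braid cones and its cell structure) is sound.
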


\subsection{The uniform matroid base polytope}
\bigskip

We use theorem \ref{algebraic} to calculate $f$-polynomials of
uniform matroid base polytopes $P_{U_{n,r}}$. The uniform matroid
$U_{n,r}$ is a matroid with the set of bases
$\mathcal{B}(U_{n,r})={[n] \choose r}$ consisting of all
$r$-elements subsets of $[n]$. From definition it follows that
$P_{U_{n,r}}$ is the hypersimplex
$$P_{U_{n,r}}=\Delta_r^{n}.$$ Note that
$\mathcal{B}(U_{n,0})=\{\emptyset\}$ and
$\mathcal{B}(U_{n,n})=\{[n]\},$ so $P_{U_{n,0}}=\mathbf{0}$ and
$P_{U_{n,n}}=\mathbf{1}$ are single points, where
$\mathbf{0}=(0,\ldots,0)$ and $\mathbf{1}=(1,\ldots,1)$.

Consider the uniform matroid $U_{n,r}$ and assume that $0<r<n$.
Let $\mathcal{F}:\emptyset=F_0\subset F_1\subset\ldots\subset
F_{k+1}=[n]$ be a flag with
$\mathrm{type}(\mathcal{F})=(\alpha_1,\ldots,\alpha_i,\alpha_{i+1},\ldots,\alpha_{k}).$
Define
$i_0=i_0(\mathcal{F})=\min\{i\mid\alpha_1+\cdots+\alpha_i\geq r\}$
and partial sums $r'=\alpha_1+\cdots+\alpha_{i_0-1},
r''=r'+\alpha_{i_0}$. By proposition \ref{decomposition} we find

$$M/\mathcal{F}=U_{r',r'}\oplus U_{\alpha_{i_0},r-r'}\oplus
U_{n-r'',0}.$$ It shows that faces of the uniform matroid base
polytopes are uniform matroid base polytopes as well, the fact
that is obvious from the description by hypersimplices. We have
$P_{M/\mathcal{F}}\cong P_{U_{\alpha_{i_0},r-r'}}$, which implies
the following formula for the $P_M$-rank, where $M=U_{n,r}$

\begin{equation}\label{uniformrank}
\mathrm{rk}_{P_M}(\mathcal{F})=\dim(P_{M/\mathcal{F}})=
\left\{\begin{array}{cc} 0,& r=r'',\\
\alpha_{i_0}-1,& r<r''\end{array}\right..
\end{equation}

Let $\circ$ be the concatenation product defined on monomial bases
by

$$M_\alpha\circ M_\beta=M_{\alpha\beta}$$
and linearly extended to the algebra of quasisymmetric functions.
We obtain from $(\ref{uniformrank})$

$$F_q(P_{U_{n,r}})={n \choose r}(M_1)^r\circ(M_1)^{n-r}+$$

$$+\displaystyle\sum_{0\leq r^{\prime}<r<r^{\prime}+\lambda\leq
n}{n \choose r^{\prime}}{n-r^{\prime} \choose
\lambda}q^{\lambda-1}(M_1)^{r^{\prime}}\circ
M_{\lambda}\circ(M_1)^{n-r^{\prime}-\lambda}.$$ The principal
specilization evaluated at $-1$ gives
$$f(P_{U_{n,r}},q)={n \choose r}+\displaystyle\sum_{0\leq r^{\prime}<r<r^{\prime}+\lambda\leq n}
{n \choose r^{\prime}}{n-r^{\prime}\choose
\lambda}q^{\lambda-1}.$$ It determines the $f$-vector of
$P_{U_{n,r}}$ in terms of trinomial coefficients
$$f_0={n \choose r} \ \ \mbox{and} \ \ f_{k-1}=\displaystyle\sum_{0\leq r^{\prime}<r<r^{\prime}+k\leq
n} {n \choose r', k, n-r'-k} \ \ \mbox{for} \ \ k=2,\ldots,n.$$

\begin{example} The polytope $P_{U_{4,2}}=\Delta_2^{4}$ is an
octahedron. We obtain
$$F_q(P_{U_{4,2}})=6\left(M_{(2,2)}+2M_{(1,1,2)}+2M_{(2,1,1)}+4M_{(1,1,1,1)}\right)q^0+12M_{(1,2,1)}q
+$$ $$+4\left(M_{(1,3)}+M_{(3,1)}\right)q^2+M_{(4)}q^3,$$ which
yields to the expected expression
$f(P_{U_{4,2}},q)=6+12q+8q^2+q^3.$
\end{example}

\section{Properties of $F_q(M)$}

In the following we will write $F_q(M)$ instead of $F_q(P_M)$. We
examine some algebraic properties of the quasisymmetric enumerator
$F_q(M)$. First we give an example which shows that $F_q(M)$
contains more information about matroids than its specialization
at $q=0$. The following example of matroids with the same
quasisymmetric invariant $F(M)$ is borrowed from \cite[Example
8.1]{BJR}.

\begin{figure}[h!]\centering
\begin{tikzpicture}[scale=1]

\draw (-1,.5)--(0,.5)--(1,.5);  \draw (-1,-.5)--(0,-.5)--(1,-.5);
\draw[fill](-1,.5)circle(1pt);\draw[fill](0,.5)circle(1pt);\draw[fill](1,.5)circle(1pt);
\draw[fill](-1,-.5)circle(1pt);\draw[fill](0,-.5)circle(1pt);\draw[fill](1,-.5)circle(1pt);

\draw (4,1)--(4,0)--(4,-1)--(5,-1)--(6,-1);
\draw[fill](4,1)circle(1pt);\draw[fill](4,0)circle(1pt);\draw[fill](4,-1)circle(1pt);
\draw[fill](5,-1)circle(1pt);\draw[fill](6,-1)circle(1pt);\draw[fill](5.5,.5)circle(1pt);

\node[right] at (1,-1) {$M_1$}; \node[right] at (6,-1) {$M_2$};

\node[below]at (-1,.5) {\tiny $1$};\node[below]at (0,.5) {\tiny
$2$};\node[below]at (1,.5) {\tiny $3$};\node[below]at (-1,-.5)
{\tiny $4$};\node[below]at (0,-.5) {\tiny $5$};\node[below]at
(1,-.5) {\tiny $6$};

\node[left]at (4,1) {\tiny $1$};\node[left]at (4,0) {\tiny
$2$};\node[below left]at (4,-1) {\tiny $3$};\node[below]at (5,-1)
{\tiny $4$};\node[below]at (6,-1) {\tiny $5$};\node[above right]at
(5.5,.5) {\tiny $6$};

\end{tikzpicture}
\caption{Matroids with $F(M_1)=F(M_2)$}\label{matroids}
\end{figure}
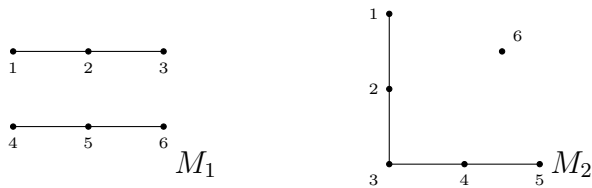

\begin{example}
Let $M_1$ and $M_2$ be affine matroids depicted on Figure
\ref{matroids}. $M_1$ and $M_2$ are the rank 3 matroids on the set
$[6]$ having every triple but $\{1,2,3\}, \{4,5,6\}$ and
$\{1,2,3\}, \{3,4,5\}$ as bases, respectively. Consider flags
$\emptyset\subset\{i\}\subset[6]\setminus\{j\}\subset[6], 1\leq
i\neq j\leq 6$ of the type $(1,4,1)$. All minors
$M_1|_{[6]\setminus\{j\}}/\{i\}$ are connected, so we have the
summand $30q^{3}M_{(1,4,1)}$ in $F_q(M_1)$. On the other hand the
flag $\emptyset\subset\{3\}\subset\{1,2,3,4,5\}\subset[6]$
produces a two components minor of $M_2$ which contributes with
$q^{2}M_{(1,4,1)}$ in $F_q(M_2)$. It shows that $F_q(M_1)\neq
F_q(M_2)$.
\end{example}

\subsection{Matroid duality}

For a matroid $M$ on $[n]$ with the collection of bases
$\mathcal{B}(M)$ the dual matroid $M^{\ast}$ is defined by the set
of bases $\mathcal{B}(M^{\ast})=\{[n]\setminus B\mid
B\in\mathcal{B}(M)\}.$ The affine transformation
$\mathrm{aff}:[0,1]^{n}\rightarrow[0,1]^{n}$ of the $n$-cube
$\mathrm{aff}(x)=\mathbf{1}-x, x\in[0,1]^{n}$, where
$\mathbf{1}=(1,\ldots,1)$ determines an affine isomorphism between
$P_M$ and $P_{M^{\ast}}$. The behavior of the quasisymmetric
invariant $F_q(M)$ for $q=0$ under matroid duality is determined
by \cite[Proposition 4.1]{BJR}. We generalize this to the
$q$-analog $F_q(M)$.

\begin{lemma}\label{aff}
For a matroid $M$ and a flag $\mathcal{F}$ on its ground set $[n]$
it holds

$$\mathrm{aff}(P_{M/\mathcal{F}})=P_{M^{\ast}/\mathcal{F}^{op}}.$$
\end{lemma}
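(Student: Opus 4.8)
The plan is to argue geometrically, reading off both sides as the faces of $P_M$ and $P_{M^{\ast}}$ selected by opposite weight vectors. Fix $\omega\in\mathbb{Z}_+^{n}$ with $\mathcal{F}=\mathcal{F}_\omega$; then by definition $P_{M/\mathcal{F}}=P_{\mathcal{B}_\omega}$ is the face of $P_M$ on which the weight function $\omega^{\ast}$ attains its maximum. Recall that $\mathrm{aff}(x)=\mathbf{1}-x$ is an affine isomorphism of $P_M$ onto $P_{M^{\ast}}$ carrying the vertex $e_B$ to $e_{[n]\setminus B}$. The first step is the elementary identity $\langle\omega,\mathrm{aff}(x)\rangle=\langle\omega,\mathbf{1}\rangle-\langle\omega,x\rangle$, which shows that $\mathrm{aff}$ maps the $\omega^{\ast}$-maximal face of $P_M$ onto the $(-\omega)^{\ast}$-maximal face of $P_{M^{\ast}}$. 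Thus $\mathrm{aff}(P_{M/\mathcal{F}})$ is the face of $P_{M^{\ast}}$ maximized by $-\omega$, and it remains to identify this face with $P_{M^{\ast}/\mathcal{F}^{op}}$.

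Second, I would pin down the flag determined by $-\omega$. Since $P_{M^{\ast}}$ lies in a hyperplane normal to $\mathbf{1}$ and the braid cone relations $(\ref{braidcone})$ involve only coordinate differences, both the maximizing face and the associated flag are unchanged if $-\omega$ is replaced by $\omega'=-\omega+t\mathbf{1}$; choosing $t$ large makes $\omega'\in\mathbb{Z}_+^{n}$, so the definitions of $(M^{\ast})_{\omega'}$ and $\mathcal{F}_{\omega'}$ apply and select the same face. Now $\omega\in\sigma_{\mathcal{F}}^{\circ}$ together with the already established relation $\sigma_{\mathcal{F}^{op}}=-\sigma_{\mathcal{F}}$ gives $-\omega\in\sigma_{\mathcal{F}^{op}}^{\circ}$, and hence $\omega'\in\sigma_{\mathcal{F}^{op}}^{\circ}$ as well, so $\mathcal{F}_{\omega'}=\mathcal{F}^{op}$. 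Therefore the $(-\omega)^{\ast}$-maximal face of $P_{M^{\ast}}$ is $P_{(M^{\ast})_{\omega'}}=P_{M^{\ast}/\mathcal{F}^{op}}$, which is exactly what we want.

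The only genuinely delicate point is this sign change: the constructions $M_\omega$ and $\mathcal{F}_\omega$ are set up for positive $\omega$, whereas duality naturally produces $-\omega$; I expect the translation-invariance along $\mathbf{1}$ above to be the crux of making the argument watertight.

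As a fully combinatorial alternative I would instead dualize the decomposition of Proposition \ref{decomposition} term by term. Writing $\mathrm{aff}(P_{M/\mathcal{F}})=P_{(M/\mathcal{F})^{\ast}}$ and using $M/\mathcal{F}=\bigoplus_{i=1}^{k+1}(M|_{F_i})/F_{i-1}$ together with the standard minor--duality identity $((M|_A)/B)^{\ast}=(M^{\ast}|_{[n]\setminus B})/([n]\setminus A)$, each summand becomes $(M^{\ast}|_{[n]\setminus F_{i-1}})/([n]\setminus F_i)$. Setting $G_j=[n]\setminus F_{k+1-j}$, the substitution $j=k+2-i$ turns this into $(M^{\ast}|_{G_j})/G_{j-1}$, and summing over $j=1,\dots,k+1$ recovers $M^{\ast}/\mathcal{F}^{op}$ by Proposition \ref{decomposition} applied to $M^{\ast}$. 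In this route the reversal of the index and the careful handling of complements is the only place where errors can creep in.
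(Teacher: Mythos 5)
Both of your arguments are correct, and they have different relationships to the paper. Your first, geometric route is in substance the paper's own proof: the paper disposes of the lemma in one line, citing the fact that $\mathcal{F}$ is maximized at a base $B\in\mathcal{B}(M)$ if and only if $\mathcal{F}^{op}$ is maximized at the cobase $[n]\setminus B\in\mathcal{B}(M^{\ast})$, and your two steps (negating the weight functional under $\mathrm{aff}$, then using $\sigma_{\mathcal{F}^{op}}=-\sigma_{\mathcal{F}}$ together with invariance under shifts by $t\mathbf{1}$, which is legitimate because $P_{M^{\ast}}$ lies in a hyperplane normal to $\mathbf{1}$ and braid cones depend only on coordinate differences) are exactly an unpacking of that fact; you correctly isolate and handle the one delicate point, positivity of the weight vector, which the paper glosses over. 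Your second route is genuinely different from anything in the paper: it replaces weight vectors entirely by the termwise minor--duality identity $((M|_A)/B)^{\ast}=(M^{\ast}|_{[n]\setminus B})/([n]\setminus A)$ applied to the decomposition of Proposition \ref{decomposition}, together with $(N_1\oplus N_2)^{\ast}=N_1^{\ast}\oplus N_2^{\ast}$; your index bookkeeping $G_j=[n]\setminus F_{k+1-j}$, $j=k+2-i$ checks out (for instance $i=1$ gives $(M^{\ast}|_{[n]})/([n]\setminus F_1)$ on ground set $F_1$, matching $((M|_{F_1})/F_0)^{\ast}$). What each buys: the geometric route stays inside the paper's normal-fan formalism and makes transparent why the opposite flag appears (negation of cones), while the combinatorial route avoids choosing a representative $\omega$ (hence the positivity/shift subtlety) and actually proves the stronger matroid-level identity $(M/\mathcal{F})^{\ast}=M^{\ast}/\mathcal{F}^{op}$, from which the polytope statement follows by applying $\mathrm{aff}$ vertexwise. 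One cosmetic caution if you write this up: the paper's conventions for $\mathcal{F}_\omega$ differ between $(\ref{braidcone})$, where $\omega$ increases along the flag, and $(\ref{flag})$, where it decreases; your argument is insensitive to this, since negation carries either convention's flag to its opposite, but the discrepancy is a natural place for a sign error to hide.
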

\begin{proof}
The statement follows from the fact that the flag $\mathcal{F}$ is
maximized at a base $B\in\mathcal{B}(M)$ if and only if
$\mathcal{F}^{op}$ is maximized at the cobase $[n]\setminus
B\in\mathcal{B}(M^{\ast})$.
\end{proof}

\begin{proposition}
The quasisymmetric function $F_q(M)$ satisfies
$F_q(M)=\sum_{\alpha\models n}p_\alpha(q)M_\alpha$ if and only if
$F_q(M^{\ast})=\sum_{\alpha\models
n}p_\alpha(q)M_{\mathrm{rev}(\alpha)}.$
\end{proposition}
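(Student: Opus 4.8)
The plan is to work entirely in the monomial basis and to reduce the statement to a single coefficient identity: for every composition $\alpha\models n$ the coefficient of $M_\alpha$ in $F_q(M)$ equals the coefficient of $M_{\mathrm{rev}(\alpha)}$ in $F_q(M^{\ast})$. Since $\{M_\alpha\}$ is a basis of $QSym$, the polynomials $p_\alpha(q)$ in the expansion $F_q(M)=\sum_\alpha p_\alpha(q)M_\alpha$ are uniquely determined, so establishing this one coefficient identity proves both directions of the equivalence simultaneously; the backward implication is just the forward one applied to $M^{\ast}$, using that duality and $\mathrm{rev}$ are involutions ($M^{\ast\ast}=M$, $\mathrm{rev}(\mathrm{rev}(\alpha))=\alpha$).

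First I would record the explicit coefficient formulas. By the expansion $(\ref{expansion})$ specialized to the matroid base polytope, $p_\alpha(q)=\sum_{\mathcal{F}:\mathrm{type}(\mathcal{F})=\alpha}q^{\mathrm{rk}_{P_M}(\mathcal{F})}$, and likewise the coefficient of $M_{\mathrm{rev}(\alpha)}$ in $F_q(M^{\ast})$ equals $\sum_{\mathcal{G}:\mathrm{type}(\mathcal{G})=\mathrm{rev}(\alpha)}q^{\mathrm{rk}_{P_{M^{\ast}}}(\mathcal{G})}$, where both sums range over flags of $[n]$.

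The key step is the rank identity $\mathrm{rk}_{P_M}(\mathcal{F})=\mathrm{rk}_{P_{M^{\ast}}}(\mathcal{F}^{op})$. This follows by combining Corollary \ref{M-rank}, which gives $\mathrm{rk}_{P_M}(\mathcal{F})=\dim(P_{M/\mathcal{F}})$, with Lemma \ref{aff}: since $\mathrm{aff}$ is an affine isomorphism it preserves dimension, so $\dim(P_{M/\mathcal{F}})=\dim(\mathrm{aff}(P_{M/\mathcal{F}}))=\dim(P_{M^{\ast}/\mathcal{F}^{op}})=\mathrm{rk}_{P_{M^{\ast}}}(\mathcal{F}^{op})$.

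Finally I would perform the change of summation variable $\mathcal{G}=\mathcal{F}^{op}$. The map $\mathcal{F}\mapsto\mathcal{F}^{op}$ is an involution, hence a bijection, on the flags of $[n]$, and by $(\ref{rev})$ it satisfies $\mathrm{type}(\mathcal{F}^{op})=\mathrm{rev}(\mathrm{type}(\mathcal{F}))$; thus $\mathrm{type}(\mathcal{G})=\mathrm{rev}(\alpha)$ holds precisely when $\mathrm{type}(\mathcal{F})=\alpha$. Substituting and applying the rank identity rewrites the coefficient of $M_{\mathrm{rev}(\alpha)}$ in $F_q(M^{\ast})$ as $\sum_{\mathcal{F}:\mathrm{type}(\mathcal{F})=\alpha}q^{\mathrm{rk}_{P_M}(\mathcal{F})}=p_\alpha(q)$, as required. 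I do not anticipate a serious obstacle: Lemma \ref{aff} already packages the only geometric input, and the rest is bookkeeping with type-reversal and the $op$-involution. The one point worth stating carefully is that an affine isomorphism of polytopes preserves dimension, as that is exactly what transports the $P_M$-rank of $\mathcal{F}$ to the $P_{M^{\ast}}$-rank of $\mathcal{F}^{op}$.
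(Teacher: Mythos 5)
Your proof is correct and follows essentially the same route as the paper: both arguments rest on Lemma \ref{aff}, the fact that the affine isomorphism $\mathrm{aff}$ preserves dimension, and the type-reversal identity $(\ref{rev})$ under the involution $\mathcal{F}\mapsto\mathcal{F}^{op}$. The only difference is bookkeeping --- you sum directly over flags via $(\ref{expansion})$ and make the rank identity $\mathrm{rk}_{P_M}(\mathcal{F})=\mathrm{rk}_{P_{M^{\ast}}}(\mathcal{F}^{op})$ explicit, whereas the paper groups the flags by their image face using $(\ref{poQ})$ and reindexes faces by $G\mapsto\mathrm{aff}(G)$.
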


\begin{proof}
We use the expansion $(\ref{poQ})$
$$F_q(M^{\ast})=\sum_{G\in
L(P_{M^{\ast}})}q^{\mathrm{dim}(G)}\sum_{\mathcal{F}:P_{M^{\ast}(\mathcal{F})}=G}M_{\mathrm{type}(\mathcal{F})}.$$
Denote by $G^{\ast}=\mathrm{aff}(G)$. Then by lemma \ref{aff} and
the fact $\mathrm{dim}(G)=\mathrm{dim}(G^{\ast})$ we can write
$$F_q(M^{\ast})=\sum_{G^{\ast}\in
L(P_{M})}q^{\mathrm{dim}(G^{\ast})}\sum_{\mathcal{F}:P_{M(\mathcal{F}^{op})}=G^{\ast}}M_{\mathrm{type}(\mathcal{F}^{op})}.$$
The proposition follows from $(\ref{rev})$.
\end{proof}

\end{document}